\newcommand{\si}{\sigma}
\newcommand{\Ga}{\Gamma}
\newcommand{\bC}{\mathbb{C}}
\newcommand{\bL}{\mathbb{L}}
\newcommand{\bQ}{\mathbb{Q}}
\newcommand{\bZ}{\mathbb{Z}}
\newcommand{\bH}{\mathbb{H}}
\newcommand{\cA}{\mathcal{A}}
\newcommand{\cB}{\mathcal{B}}
\newcommand{\cD}{\mathcal{D}}
\newcommand{\cE}{\mathcal{E}}
\newcommand{\cG}{\mathcal{G}}
\newcommand{\cL}{\mathcal{L}}
\newcommand{\cI}{\mathcal{I}}
\newcommand{\cM}{\mathcal{M}}
\newcommand{\cO}{\mathcal{O}}
\newcommand{\cR}{\mathcal{R}}
\newcommand{\cQ}{\mathcal{Q}}
\newcommand{\cH}{\mathcal{H}}
\newcommand{\id}{\mathrm{id}}
\newcommand{\Ob}{\mathrm{Ob}}
\newcommand{\Hom}{\mathrm{Hom}}
\newcommand{\End}{\mathrm{End}}
\newcommand{\age}{\mathrm{age}}
\newcommand{{\inv} }{\mathrm{inv}}
\newcommand{\ev}{\mathrm{ev}}
\newcommand{\Aut}{\mathrm{Aut}}
\newcommand{\pt}{\mathrm{pt}}
\newcommand{\Res}{\mathrm{Res}}
\newcommand{\val}{ {\mathrm{val}} }
\newcommand{\vir}{{\mathrm{vir}}}
\newcommand{\CR}{  {\mathrm{CR}}  }
\newcommand{\Conj}{\mathrm{Conj}}
\newcommand{\diag}{\mathrm{diag}}
\newcommand{\Cont}{\mathrm{Cont}}
\newcommand{\Spec}{\mathrm{Spec}}
\newcommand{\one}{\mathbf{1}}
\newcommand{\be}{\mathbf{e}}
\newcommand{\bt}{\mathbf{t}}
\newcommand{\bq}{\mathbf{q}}
\newcommand{\bp}{\mathbf{p}}
\newcommand{\bw}{\mathbf{w}}
\newcommand{\bGa}{\mathbf{\Ga}}
\newcommand{\zero}{\mathbf{0}}
\newcommand{\bmu}{\boldsymbol\mu}
\newcommand{\su}{\mathsf{u}}
\newcommand{\sw}{\mathsf{w}}
\newcommand{\tS}{ {\tilde{S}} }
\newcommand{\tR}{ {\tilde{R}} }
\newcommand{\vGa}{\vec{\Ga}}
\newcommand{\Mbar}{\overline{\cM}}
\newtheorem{dummy}{dummy}[section]
\newtheorem{lemma}[dummy]{Lemma}
\newtheorem{theorem}[dummy]{Theorem}
\newtheorem{definition}[dummy]{Definition}
\newtheorem{remark}[dummy]{Remark}
\newtheorem{example}[dummy]{Example}
\begin{document}

\title{Equivariant Gromov-Witten Theory of GKM Orbifolds}

\author{Zhengyu Zong}
\address{Zhengyu Zong, Yau Mathematical Sciences Center, Tsinghua University, Jin Chun Yuan West Building,
Tsinghua University, Haidian District, Beijing 100084, China}
\email{zyzong@mail.tsinghua.edu.cn}

\begin{abstract}
In this paper, we study the all genus Gromov-Witten theory for any GKM orbifold $X$. We generalize the Givental formula which is studied in the smooth case in \cite{Giv2} \cite{Giv3} \cite{Giv4} to the orbifold case. Specifically, we recover the higher genus Gromov-Witten invariants of a GKM orbifold $X$ by its genus zero data. When $X$ is toric, the genus zero Gromov-Witten invariants of $X$ can be explicitly computed by the mirror theorem studied in \cite{CCIT} and our main theorem gives a closed formula for the all genus Gromov-Witten invariants of $X$. When $X$ is a toric Calabi-Yau 3-orbifold, our formula leads to a proof of the Remodeling Conjecture in \cite{Fang-Liu-Zong3}.

\end{abstract}

\maketitle

\tableofcontents

\section{Introduction}

\subsection{background and motivation}
Let $X$ be an algebraic GKM manifold, which means that there exists an algebraic torus $T$ acting on $X$ such that there are only finitely many fixed points and finitely many 1-dimensional orbits. In the sequence of papers \cite{Giv2} \cite{Giv3} \cite{Giv4}, Givental studies the all genus equivariant Gromov-Witten theory of a GKM manifold $X$. He obtains a formula for the full descendent potential of $X$ and conjectures that the same formula is true for any $X$ with semisimple Frobenius structure. This formula is often referred to Givental formula. It expresses the higher genus Gromov-Witten invariants of $X$ in terms of the genus 0 data. The key point is that the genus 0 data in this formula can be expressed in abstract terms of semisimple Frobenius structures of the quantum cohomology of $X$ and vice versa. So one can reconstruct the higher genus Gromov-Witten theory of $X$ by the Frobenius structure of its quantum cohomology.

When $X$ is toric, one can build the mirror symmetry between $X$ and its Landau-Ginzburg mirror. The mirror theorem (see \cite{Giv0} \cite{Giv1} and \cite{Lian-Liu-Yau1} \cite{Lian-Liu-Yau2} \cite{Lian-Liu-Yau3} ) for smooth toric varieties gives an isomorphism between the quantum cohomology ring of $X$ and the Jacobi ring of its Landau-Ginzburg mirror. So one can identify the Frobenius structures of these two rings. In particular, the quantum differential equations in A and B-model can be identified. Under this identification, the genus 0 data in Givental formula can be described explicitly on B-model side. For example, the data coming from the fundamental solution of the quantum differential equation can be given by oscillatory integrals over the Lefschetz thimbles on B-model side, the norms of the canonical basis is related to the Hessians at the critical points of the super potential and so on. Of course, the above procedure can also be generalized to other homogeneous spaces such as Grassmannians.

It is natural to ask whether the Givental formula can be generalized to the orbifold case. On the Gromov-Witten theory side, one wants to study the higher genus equivariant orbifold Gromov-Witten theory for GKM orbifolds such as toric orbifolds and other homogeneous orbifold spaces. So we want to obtain an orbifold Givental formula so that we can recover the higher genus data by the Frobenius structures of the target $X$. When $X$ is toric, the orbifold mirror theorem is proved in \cite{CCIT}. Just as in the smooth case, one can identify the quantum cohomology ring of $X$ to the Jacobi ring of its mirror. So we can give nice explicit explanations of the Frobenius structure of $X$ in terms of corresponding structures in B-model. Therefore if we can obtain an orbifold version of the Givental formula, the higher genus Gromov-Witten theory of $X$ can be recovered effectively by concrete data. This idea also applies to many other GIT quotients where similar mirror symmetry phenomenon arises \cite{Che-Cio-Kim}.

Perhaps, the most interesting case is when $X$ is a toric Calabi-Yau 3-orbifold. In this case, the \emph{Remodeling Conjecture} is established in \cite{Ma} \cite{BKMP1} and \cite{BKMP2}. The Remodeling Conjecture gives us a higher genus B-model which itself arises naturally in the matrix-model theory. The Remodeling Conjecture claims that the B-model higher genus potential can be identified with the A-model higher genus potential, which can be viewed as a all genus mirror symmetry statement. The higher genus B-model potential is obtained by applying the Eynard-Orantin recursion \cite{EO07} to the mirror curve of $X$. This potential has a graph sum formula \cite{DOSS} which has exactly the same form of the corresponding graph sum of Givental formula on A-model side . The identification of the higher genus potentials is then reduced to the identification of Frobenius structures which can be deduced from the genus 0 mirror theorem \cite{CCIT}. So the proof of the orbifold Givental formula plays a crucial role in proving the Remodeling Conjecture (see \cite{Fang-Liu-Zong3}). Besides, one needs to study the Frobenius structures that appear in the Givental formula carefully in order to match them to the corresponding structures on the B-model side. To prove the Remodeling Conjecture \cite{Fang-Liu-Zong3} is one of the main reasons for the author to study the orbifold Givental formula.

For higher dimensional toric orbifolds, one can try to generalize the Eynard-Orantin recursion to higher dimensional varieties. This may generalize the Remodeling Conjecture to higher dimensional toric orbifolds. This may be a further application of the orbifold Givental formula.

\subsection{Statement of the main result and outline of the proof}
In this paper, we prove the Givental quantization formula for the all genus equivariant Gromov-Witten potential for any GKM orbifold $X$. We consider the $T$-equivariant Chen-Ruan orbifold cohomology $H^*_{\CR,T}(X;\cQ')$ together with the $T$-equivariant orbifold product $\star_X$ and the $T$-equivariant orbifold Poincar\'{e} pairing $\langle ,\rangle_{X,T}$ on $H^*_{\CR,T}(X;\cQ')$. Here $\cQ'$ is a field extension of $\bC$ by including some $T-$equavairant parameters. The triple
$$
(H^*_{\CR,T}(X;\cQ'),\star_X,\langle ,\rangle_{X,T})
$$
is a Frobenius algebra over $\cQ'$ (see Section \ref{Frobenius}). Further more, we will explicitly construct a canonical basis $\{\bar\phi_\mu\}_{\mu\in\Sigma_X}$ for $H^*_{\CR,T}(X;\cQ')$, which means that $(H^*_{\CR,T}(X;\cQ'),\star_X,\langle ,\rangle_{X,T})$ is semisimple.

In Section \ref{Frobenius}, we generalize the classical $T$-equivariant orbifold product $\star_X$ to the $T$-equivariant \emph{quantum product} $\star_t$. Let $H:=H^*_{\CR,T}(X;\cQ'\otimes_\bC N(X))$ where $N(X)$ is the Novikov ring. Given a point $t\in H$ and any two cohomology classes $a,b\in H^*_{\CR,T}(X;\cQ'\otimes_\bC N(X))$, the quantum product $\star_t$ of $a$ and $b$ at $t$ is defined to be
$$\langle a\star_t b,c\rangle_{X,T}=\llangle a,b,c\rrangle_{0,3}^{X,T},$$
where $c\in H^*_{\CR,T}(X;\cQ'\otimes_\bC N(X))$ is any cohomology class and $\llangle a,b,c\rrangle_{0,3}^{X,T}$ is defined by the genus zero Gromov-Witten invariants of $X$. Under the quantum product $\star_t$, the formal completion
\begin{eqnarray*}
\hat{H}:=\Spec \left(\cQ'\otimes_\bC N(X)[[t^\mu]]\right)
\end{eqnarray*}
is a formal Frobenius manifold over the ring $\cQ'\otimes_\bC N(X)$ (see Section \ref{Formal completion}). It is the formal completion of $H$ at the origin. The set of global vector fields $\Gamma(\hat{H},T\hat{H})$ is a free $\cO_{\hat{H}}(\hat{H})$-module and the triple $(\Gamma(\hat{H},T\hat{H}),\star_t,\langle, \rangle_{X,T})$ is a Frobenius algebra over the ring $\cO_{\hat{H}}(\hat{H})=\cQ'\otimes_\bC N(X)[[t^\mu]]$. It is called the \emph{quantum cohomology} of $X$ and is denoted by $QH^*_{\CR,T}(X)$. We will show that the semi-simplicity of $(H^*_{\CR,T}(X;\cQ'),\star_X,\langle ,\rangle_{X,T})$ implies the semi-simplicity of $\hat{H}$. We will also define another product $\bullet$ on $\Gamma(\hat{H},T\hat{H})$ to define another Frobenius algebra $A:=(\Gamma(\hat{H},T\hat{H}),\bullet,\langle, \rangle_{X,T})$. The two Frobenius algebras $QH^*_{\CR,T}(X)$ and $A$ are isomorphic under a linear operator $\Psi$.

We will consider the Dubrovin connection $\nabla^{\hat{H}}$ on the tangent bundle $T\hat{H}$:
$$\nabla^{\hat{H}}_{\mu}=\frac{\partial}{\partial t^{\mu}}-\frac{1}{z}\tilde{\phi}_{\mu}\star_t$$
and the \emph{quantum differential equation}
$\nabla^{\hat{H}} \tau(z)=0$.

By Teleman's classification of semisimple cohomological field theories \cite{Tel}, the semi-simplicity of $\hat{H}$ implies the following orbifold Givental quantization formula:
\medskip
\paragraph{\bf Theorem \ref{ancestor}}
{\it There exists a fundamental solution $\tS=\Psi \tR(z)e^{U/z}$ to the quantum differential equation $\nabla^{\hat{H}} \tau(z)=0$ in Theorem \ref{fundamental} such that}
\begin{equation}\label{eqn:ancestor}
\cA_X(t)=\widehat{\Psi}\widehat{\tR}\cD_{I_A}.
\end{equation}
Here $U=\diag(u_1,\cdots,u_N)$ with $u_i$ the canonical coordinate of $\hat{H}$,  $\widehat{\Psi}$ is the operator $\cG(\Psi^{-1}\bq)\mapsto \cG(\bq)$ for any element $\cG$ in the Fock space, $\widehat{\tR}$ is the quantization of the $R-$matrix $\widehat{\tR}$, $\cA_X(t)$ is the ancestor potential of $X$, and $\cD_{I_A}$ is the descendent potential of the trivial cohomological field theory on $A$.

By considering the $S-$operator $S_t$ defined by
$$\langle a,S_tb\rangle_{X,T}=\langle a,b \rangle_{X,T}+\llangle a,\frac{b}{z-\psi}\rrangle_{0,2}^{X,T},$$
we obtain the Givental quantization formula for total descendent potential $\cD_X$ of $X$:
\medskip
\paragraph{\bf Theorem \ref{ancestor-descendent}}
\begin{equation}\label{eqn:ancestor-descendent}
\cD_X=\exp(F_1(t))\widehat{S_t}^{-1}\cA_X(t)=
\exp(F_1(t))\widehat{S_t}^{-1}\widehat{\Psi}\widehat{\tR}\cD_{I_A}.
\end{equation}
Here $F_1(t)$ is the genus one primary Gromov-Witten potential of $X$.

Let $F_{g,k}^{X,T}(\bt,t)$ and $\bar{F}_{g,k}^{X,T}(\bt,t)$ be the descendent potential and the ancestor potential with primary insertions respectively. Then Theorem \ref{ancestor-descendent} is equivalent to
\medskip
\paragraph{\bf Theorem \ref{descendent-ancestor}}
{\it For $2g-2+k> 0$, we have the following relation}
\begin{equation}\label{eqn:descendent-ancestor}
F_{g,k}^{X,T}(\bt,t)=\bar{F}_{g,k}^{X,T}([S_t\bt]_+,t).
\end{equation}
Here $[S_t\bt]_+$ is the part of $S_t\bt$ containing nonnegative powers of $z$.

We will also rewrite the Givental quantization formula in terms of the graph sum formula in Section \ref{graph}. The graph sum formula is crucial in the proof of the Remodeling Conjecture in \cite{Fang-Liu-Zong3}.

In the above quantization formula, the missing higher genus potential is the genus one primary Gromov-Witten potential $F^{X,T}_{1,0}(t)$ of $X$. The following theorem gives an explicit formula of $F^{X,T}_{1,0}(t)$ in terms of the Frobenius structure:
\medskip
\paragraph{\bf Theorem \ref{F1}}
\begin{equation}\label{eqn:F1}
dF^{X,T}_{1,0}(t)=\sum_{i=1}^N\frac{1}{48}d\log\Delta^i+\sum_{i=1}^N\frac{1}{2}(\tR_1)_i^{\,\ i}du^i.
\end{equation}
Here $(\Delta^i)^{-1}=\langle \frac{\partial}{\partial u^i},\frac{\partial}{\partial u^i}\rangle_{X,T}$.

Since we are working with the equivariant Gromov-Witten theory, the Frobenius manifold $\hat{H}$ is not conformal. Therefore, the ambiguity of the $R-$matrix $\tR$ cannot be fixed by the method in the conformal case by using the Euler vector field. However, by Theorem \ref{fundamental} the ambiguity is a constant matrix which allows us to fix the ambiguity by passing to the case when $t=0,Q=0$, where $Q$ is the Novikov variable. The discussion on the ambiguity of the $R-$matrix for general cohomological field theory appears in Teleman's paper \cite[Section 8.2]{Tel}. In the case of equivariant Gromov-Witten theory, the ambiguity of the $R-$matrix can be fixed by using the smooth/orbifold quantum Riemann-Roch theorem \cite{CG} \cite{Tse}. In Givental's original paper \cite{Giv2} and \cite{Giv3}, he fixes the ambiguity of the $R-$matrix for GKM manifolds by using the quantum Riemann-Roch theorem \cite{CG}. The orbifold generalization of the quantum Riemann-Roch theorem is obtained by Tseng in \cite{Tse}. In the case of $[\bC^r/G]$ for $G$ a finite abelian group, Brini-Cavaleri-Ross \cite{BCR} fix the ambiguity of the $R-$matrix by using the orbifold Riemann-Roch theorem \cite{Tse} and apply it to prove the open Crepant Transformation Conjecture for $[\bC^2/\bZ_n]\times \bC$ (See Section \ref{CTC}). In our case, we use the orbifold Riemann-Roch theorem to fix the ambiguity of the $R-$matrix for general GKM orbifolds. More concretely, we have the following reconstruction theorem from genus zero data:
\medskip
\paragraph{\bf Theorem \ref{reconstruction}}
{\it The descendent potential $F_{g,k}^{X,T}(\bt,t)$ of a GKM orbifold $X$ can be uniquely reconstructed from the operator $\tR$ which is uniquely determined by the quantum multiplication law and the property}
\begin{equation}\label{eqn:reconstruction}
(\tR_{j}^{\,\ i}) |_{t=0,Q=0}=\diag\big(((P_\sigma)_{j}^{\,\ i})\big)
\end{equation}
where $\diag\big(((P_\sigma)_{j}^{\,\ i})\big)$ is a constant matrix which is explicitly given by Theorem \ref{ambiguity}.

\subsection{Applications and future work}
The orbifold Givental quantization formula has many interesting applications. We discuss two aspects among these applications. The first aspect is about mirror symmetry. More concretely, the orbifold Givental quantization formula leads to a proof of the Remodeling Conjecture for all genus open-closed orbifold Gromov-Witten invariants of an arbitrary semi-projective toric Calabi-Yau 3-orbifold. The second aspect of applications is about the all genus Crepant Transformation Conjecture. There are two cases which are particularly interesting: the all genus open-closed Crepant Transformation Conjecture for toric Calabi-Yau 3-orbifolds and the all genus Crepant Transformation Conjecture for the equivariant total descendent potential for (possibly noncompact) toric orbifolds.

\subsubsection{The Remodeling Conjecture for semi-projective toric Calabi-Yau 3-orbifolds}
The Remodeling Conjecture was proposed by Bouchard-Klemm-Mari\~{n}o-Pasquetti \cite{BKMP1} \cite{BKMP2} based on the work of Eynard-Orantin \cite{EO07} and Mari\~{n}o \cite{Ma}. This conjecture builds a correspondence between the higher genus open-closed Gromov-Witten potentials of an arbitrary semi-projective toric Calabi-Yau 3-orbifold $X$ and the Eynard-Orantin invariants of the mirror curve of $X$. It can be viewed as an all genus open-closed mirror symmetry for semi-projective toric Calabi-Yau 3-orbifolds. The Remodeling Conjecture is proved in \cite{Fang-Liu-Zong3}. The key idea in this proof is that we
can realize the A-model and B-model higher genus potentials as
quantizations on two isomorphic semi-simple Frobenius structures. The A-model quantization is given by the orbifold Givental quantization formula studied in this paper and the B-model quantization is given by the Eynard-Orantin recursion on the mirror curve. So the Remodeling Conjecture can be reduced to the identification of the Frobenius structures on A-model and B-model. The Frobenius structures are genus zero data and hence this identification follows from the genus zero mirror theorem for toric orbifolds \cite{CCIT}.

\subsubsection{The all genus Crepant Transformation Conjecture}\label{CTC}
The Crepant Transformation Conjecture was first proposed by Ruan \cite{Ruan1, Ruan2}
and later generalized to various situations  by Bryan-Graber \cite{BG},
Coates-Corti-Iritani-Tseng \cite{CCIT}, Coates-Ruan \cite{CoRu}, etc.
The CRC relates the orbifold Gromov-Witten theory of a Gorenstein orbifold to the Gromov-Witten theory of its crepant resolution. In general, the higher genus Crepant Transformation Conjecture is difficult to formulate and prove. The orbifold Givental quantization formula is a powerful tool to prove higher genus Crepant Transformation Conjecture.

A nice result about the all genus Crepant Transformation Conjecture is obtained by Coates-Iritani \cite{CI}. They proved the all genus Crepant Transformation Conjecture for the (non-equivariant) total descendent potential for an arbitrary compact toric orbifold $X$. In this case, the Frobenius manifold coming from the (non-equivariant) quantum cohomology of $X$ is conformal. By using the orbifold Givental quantization formula for equivariant Gromov-Witten theory studied in this paper, we can generalize the result in \cite{CI} to prove the all genus Crepant Transformation Conjecture for the equivariant total descendent potential for possibly noncompact toric orbifolds. In this case, the underlying Frobenius manifold is not conformal and the ambiguity for the $R-$matrix is fixed by the method in Section 6 rather than by the Euler vector field.

Another interesting case is the all genus Crepant Transformation Conjecture for noncompact target spaces. In \cite{Zhou}, Zhou proved the the all genus Crepant Transformation Conjecture for the descendent potential for $[\bC^2/\bZ_n]$ and for the primary potential for $[\bC^2/\bZ_n]\times \bC$. Later Brini-Cavaleri-Ross \cite{BCR} formulated the all-genus open Crepant Transformation Conjecture for Hard-Lefshetz toric Calabi-Yau 3-orbifolds in terms of
winding neutral open potentials and proved this conjecture for $[\bC^2/\bZ_n]\times \bC$. After that Brini-Cavalieri \cite{BrCa} proved this conjecture for $[\bC^3/(\bZ_2\times\bZ_2)]$. The strategies in \cite{BCR} and \cite{BrCa} are enlightening and have an influence on the future works on the all-genus open Crepant Transformation Conjecture and on orbifold Gromov-Witten theory. In a forthcoming paper \cite{FLZ4}, we will use the Remodeling Conjecture proved in \cite{Fang-Liu-Zong3} to prove the all genus open Crepant Transformation Conjecture for general semi-projective toric Calabi-Yau 3-orbifolds.

\subsection{Overview of the paper}
In section 2, we first study the geometry of an GKM orbifold $X$ and then move on to the equivariant Chen-Ruan cohomology of $X$. It turns out that the equivariant Chen-Ruan cohomology ring is a semisimple Frobenius algebra.

In section 3, we first review several definitions about the equivariant Gromov-Witten theory of $X$. Then we will deduce the semisimplicity of the Frobenius manifold of $X$ from the semisimplicity of its classical equivariant Chen-Ruan cohomology. After that we will consider the quantum differential equation and its fundamental solutions. One of them is given explicitly by the 1-primary 1-descendent genus 0 potential. This solution will play a special role in the later context.

In section 4, we review the quantization procedure which will be used in the later sections.

In section 5, we will obtain the orbifold Givental formula (Theorem \ref{ancestor}, Theorem \ref{ancestor-descendent}, and Theorem \ref{descendent-ancestor}) by applying Teleman's result \cite{Tel} of classification of 2D semisimple cohomological field theories to our case. Since the Frobenius manifold of $X$ is semisimple, the cohomological field theory of $X$ coming from the Gromov-Witten theory lies in the same orbit of the trivial field theory of the same Frobenius structure under certain group actions. The Givental formula can be derived using this group action point of view. After that we give a more explicit graph sum formula (Theorem \ref{graph sum}) for the higher genus potential of $X$. This graph sum formula is crucial in the proof of the Remodeling Conjecture \cite{Fang-Liu-Zong3}. We also use this graph formula to derive a formula for the genus-one primary Gromov-Witten potential of $X$ (Theorem \ref{F1}).

In section 6, we fix the ambiguity of the $R-$operator in Givental formula. Since we are working with equivariant Gromov-Witten theory, our Frobenius structure is not conformal. Therefore the ambiguity with the $R-$operator in Givental formula cannot be fixed by the usual method using the Euler vector field. Instead, we use the structure of the solution space to the quantum differential equation (Theorem 5.1). Then we know that the ambiguity is a constant matrix. So we study the case when the degree is 0 and when there is no primary insertion and compare the Givental formula in this case with the orbifold quantum Riemann-Roch theorem in \cite{Tse}. In the end, we obtain the reconstruction theorem by expressing the ambiguity with the $R-$operator in terms of the explicit constant matrix given by orbifold quantum Riemann-Roch theorem.

\subsection{Acknowledgments}
I wish to express my deepest thanks to my advisor Chiu-Chu Melissa Liu. Her guidance has been always enlightening for me and helps me through the whole process of my work. This paper could not be possible without my advisor Chiu-Chu Melissa Liu. I also wish to thank Bohan Fang and Davesh Maulik for their helpful communications and to thank Andrea Brini, Renzo Cavalieri, and Dustin Ross for their comments on the first version of this paper. The author is partially supported by the start-up grant at Tsinghua University.

\section{Equivariant Chen-Ruan cohomology of GKM orbifolds}
In this section, we discuss the geometry and basic properties of any GKM orbifold $X$. We will study the classical equivariant Chen-Ruan cohomology ring of $X$ and construct its canonical basis.

The concept of a GKM manifold is first established in \cite{GKM} by Goresky-Kottwitz-MacPherson. An algebraic GKM manifold is a smooth algebraic variety with an algebraic action of a torus $T=(\bC^*)^m$, such that there are finitely many torus fixed points and finitely many one-dimensional orbits. Examples of algebraic GKM manifolds include toric manifolds, Grassmanians, flag manifolds and so on. The advantage of a GKM manifold $X$ is that one can study the classical equivariant cohomology of $X$ and the Atiyah-Bott localization via the combinatorics tool called GKM graphs. This might be the original motivation for people to study GKM manifolds. By generalizing the classical Atiyah-Bott localization to the virtual localization, one can compute the equivariant Gromov-Witten invariants of an algebraic GKM manifold in terms of summing over GKM graphs (see \cite{Gra-Pan1} page 20-21 for more details). The localization procedure in this case is completely similar to that in the toric case.

In this section, we study the classical equivariant Chen-Ruan cohomology of any GKM orbifold $X$, which is a generalization of the GKM manifold. The discussion of equivariant Gromov-Witten theory of $X$ is in the next section.

\subsection{GKM orbifolds}
Let $X$ be an $r-$dimensional smooth Deligne-Mumford stack with a quasi-projective coarse moduli space. Let $T=(\bC^*)^m$ be an algebraic torus acting on $X$.

\begin{definition} We say that $X$ is a \emph{GKM orbifold} if
\begin{enumerate}
\item There are finitely many $T-$fixed points.

\item There are finitely many one-dimensional orbits.

\end{enumerate}
\end{definition}
In our definition, $X$ can be noncompact and can have nontrivial generic stabilizers.

Let $p_1,\cdots, p_n$ be the $T-$fixed points of $X$. These points may be stacky and locally around each $p_\sigma$, the tangent space $T_{p_\si}X$ is isomorphic to $[\bC^r/G_\sigma]$ with $G_\sigma$ a finite group and with the $r$ axes the corresponding $r$ one dimensional orbits containing $p_\sigma,\sigma=1,\cdots,n$. Since $T$ acts on $X$, we know that the action of $T$ commutes with the action of $G_\sigma$. On the other hand, since there are only finitely many 1-dimensional orbits, the characters of the $T-$action along any two axes are linearly independent. So the image of the representation $\rho_\sigma: G_\sigma\to GL(r,\bC)$ must be contained in the maximal torus. Therefore, $\rho_\sigma$ splits into $r$ one-dimensional representations $\chi_{\sigma j}:G_\sigma\to \bC^*$, $j=1,\cdots,r$. Let $\bmu_{l_{\sigma j}}=\chi_{\sigma j}(G_\sigma)=\{z\in\bC^*|z^{l_{\sigma j}}=1\}\subseteq \bC^*$, $\sigma=1,\cdots,n, j=1,\cdots,r$.

Let $N=\Hom(\bC^*, T)$ be the lattice of 1-parameter subgroups of $T$ and $M=\Hom(T,\bC^*)$ the lattice of irreducible characters of $T$. Then $M$ is the dual lattice of $N$ and we have a canonical identification $M\cong H^2_T(\pt,\bZ)$. Let $\bw_{\sigma j}$ be the character of the $T-$action along the $j-$th tangent direction at $p_\sigma$. Then $\bw_{\sigma j}$ lies in $H^2_T(\pt,\bQ)\cong M_\bQ:=M\otimes_\bZ \bQ$ and $\bw_{\sigma i}$ and $\bw_{\sigma j}$ are linearly independent for any $i\neq j$. The rational coefficient here is used to adapt the orbifold structure at $p_\sigma$.

\subsection{Chen-Ruan orbifold cohomology}
In this subsection, we review the definition and basic properties of the Chen-Ruan orbifold cohomology \cite{Chen-Ruan04} of a smooth Deligne-Mumford stack $X$.

\subsubsection{The inertia stack}
\begin{definition}
Let $X$ be a Deligne-Mumford stack. The inertia stack $\cI X$ associated to $X$ is defined to be the fiber product
$$\cI X=X\times_{\Delta, X\times X,\Delta}X,$$
where $\Delta: X\to X\times X$ is the diagonal morphism.
\end{definition}
The objects in the category $\cI X$ can be described as
$$\Ob(\cI X)=\{(x,g)|x\in \Ob(X),g\in \Aut_{X}(x)\}.$$
The morphisms between two objects in the category $\cI X$ are
$$\Hom_{\cI X}((x_1,g_1),(x_2,g_2))=\{h\in\Hom_X(x_1,x_2)|hg_1=g_2h\}.$$
In particular,
$$\Aut_{\cI X}(x,g)=\{h\in\Aut_X(x)|hg=gh\}.$$

There is a natural projection $q:\cI X\to X$ which, on objects level, sends $(x,g)$ to $x$. There is also an involution map $\iota:\cI X\to\cI X$ which sends $(x,g)$ to $(x,g^{-1})$. The inertial stack $\cI X$ is in general not connected even if $X$ is connected. Suppose $X$ is connected and let
$$\cI X=\bigsqcup_{i\in I}X_i$$
be the disjoint union of connected components. There is a distinguished component
$$X_0=\{(x,\id_x)|x\in\Ob(X)\}$$
which is isomorphic to $X$. The restriction of the involution map $\iota$ to $X_i$ gives an isomorphism between $X_i$ and another connected component of $\cI X$. In particular, the restriction of $\iota$ to $X_0$ gives an identity map.

\begin{example}\label{BG}
Let $G$ be a finite group. Consider the quotient stack $\cB G:=[\pt/G]$ which is called the classifying space of $G$. There is only one object $x$ in $\cB G$ and $\Hom (x,x)=G$. By definition, the objects of $\cI \cB G$ are
$$\Ob(\cI \cB G)=\{(x,g)|g\in G\}.$$
The morphisms between two objects $(x_1,g_1),(x_2,g_2)$ are
$$\Hom((x_1,g_1),(x_2,g_2))=\{g\in G|gg_1=g_2g\}=\{g\in G|g_1=g^{-1}g_2g\}.$$
So we have
$$\cI \cB G\cong [G/G]$$
where $G$ acts on $G$ by conjugation. Therefore
$$\cI \cB G=\bigsqcup_{(h)\in \Conj(G)}(\cB G)_{(h)}=\bigsqcup_{(h)\in \Conj(G)}[\pt/C(h)]$$
where $(h)$ is the conjugacy class of $h\in G$ and $C(h)$ is the centralizer of $h$ in $G$.
\end{example}

\subsubsection{Chen-Ruan orbifold cohomology}
As vector spaces, the Chen-Ruan orbifold cohomology \cite{Chen-Ruan04} of a smooth Deligne-Mumford stack $X$ is the same as the usual cohomology of the inertia stack $\cI X$. The difference is the definition of the degree. Let us first discuss the definition of {\em age} which determines the degree of the Chen-Ruan orbifold cohomology.

Given an object $(x,g)\in\Ob(\cI X)$, we have a linear map $g:T_xX\to T_xX$ such that $g^l=\id$, where $l$ is the order of $g$. Let $\zeta=e^{2\pi i/l}$ and then the eigenvalues of $g$ are given by $\zeta^{c_1},\cdots,\zeta^{c_r}$, where $c_i\in\{0,\cdots,l-1\}, r=\textrm{dim}X$. Define
$$\age(x,g)=\frac{c_1+\cdots+c_r}{l}.$$
The function $\age:\cI X\to \bQ$ is constant on each connected component $X_i$ of $\cI X$ and we define $\age(X_i)$ to be $\age(x,g)$ for any object $(x,g)$ in $X_i$.

\begin{definition}
Let $X$ be a smooth Deligne-Mumford stack. The Chen-Ruan orbifold cohomology group of $X$ is defined to be
$$H^*_{\CR}(X):=\bigoplus_{a\in\bQ}H^a_{\CR}(X)$$
where
$$H^a_{\CR}(X)=\bigoplus_{i\in I}H^{a-2\age(X_i)}(X_i).$$
\end{definition}
If $X$ is proper, the orbifold Poincare pairing is defined to be
$$\langle\alpha,\beta\rangle_X=
\left\{\begin{array}{ll}\int_{X_i}\alpha\cup\iota^*_i(\beta), &X_j=\iota(X_i),\\
0, &\textrm{otherwise,} \end{array} \right.$$
where $\alpha\in H^*(X_i)$ and $\beta\in H^*(X_j)$.

The product structure for the Chen-Ruan orbifold cohomology is more subtle.
\begin{definition}\label{orbifold product}
For any $\alpha,\beta\in H^*_{\CR}(X)$, their orbifold product $\alpha\star_X\beta$ is defined as follows: For any $\gamma\in H^*_{\CR}(X)$,
$$\langle\alpha\star_X\beta,\gamma\rangle_X:=\langle \alpha,\beta,\gamma\rangle_{0,3,0}^X,$$
where the right hand side will be defined in Section \ref{orbifold GW}.

\end{definition}

\subsection{Equivariant Chen-Ruan cohomology of GKM orbifolds}\label{CRcoh}
In this subsection, we focus ourselves to the case when $X$ is a GKM orbifold. We will show that the equivariant Chen-Ruan cohomology of $X$ is semisimple by constructing its canonical basis explicitly.

\subsubsection{The simplest case: $X=[\bC^r/G]$}
Let $\Conj(G)$ denote the set of conjugacy classes in $G$. Then the inertia stack $\cI X$ can be described as
$$\cI X=\bigsqcup_{(h)\in \Conj(G)}[(\bC^r)^h/C(h)]=\bigsqcup_{(h)\in \Conj(G)}X_{(h)}$$
where $(h)$ is the conjugacy class of $h\in G$ and $C(h)$ is the centralizer of $h$ in $G$.

For any $i\in\{1,\cdots,r\}$ and any $h\in G$, define $c_i(h)\in [0,1)$ and $\age(h)$ by
\begin{equation}\label{eqn:ci-h}
e^{2\pi\sqrt{-1}c_i(h)} =\chi_i(h),
\end{equation}
\begin{equation}\label{eqn:age-h}
\age(h)= \sum_{j=1}^{r}c_j(h).
\end{equation}

As a graded vector space over $\bC$, the Chen-Ruan cohomology $H^*_\CR(X;\bC)$ of $X$ can be decomposed as
$$
H^*_\CR(X;\bC) = \bigoplus_{(h)\in \Conj(G)} H^*(X_{(h)}; \bC)[2\age(h)] =\bigoplus_{(h)\in \Conj(G)}\bC \one_{(h)},
$$
where $\deg(\one_{(h)})=2\age(h)$ which is independent of the choice of $h$ in its conjugacy class.

Let $\cR=H^*(\cB T;\bC)=\bC[\su_1,\cdots,\su_m]$, where
$\su_1,\cdots,\su_m$ are the first Chern classes of the universal line bundles over $\cB T$.
The $T$-equivariant Chen-Ruan orbifold cohomology $H^*_{\CR,T}(X;\bC)$
is an $\cR$-module. Given $h\in G$, define
$$
\be_h := \prod_{i=1}^r \sw_i^{\delta_{c_i(h),0}} \in \cR.
$$
Here, $\bw_{i}$ is the character of the $T-$action along the $i-$th direction. In particular,
$$
\be_1 = \prod_{i=1}^r \sw_i.
$$
Then the $T$-equivariant Euler class of $\zero_{(h)}:=[0/C(h)]$ in $X_{(h)} = [(\bC^r)^h/C(h)]$ is
$$
e_{T}(T_{\zero_{(h)}}X_{(h)}) = \be_h \one_{(h)} \in H^*_{\CR,T}(X_{(h)};\bC) = \cR \one_{(h)}.
$$

Let $\chi_1,\cdots,\chi_r: G\to \bC^*$ and $l_1,\cdots,l_r\in\bZ$ be defined as in the previous section. Define
$$\cR' =\bC[\su_1,\cdots,\su_m,\sw_1^{\frac{1}{2l_1}},\cdots,\sw_r^{\frac{1}{2l_r}}]$$
which is a finite extension of $\cR$. Let $\cQ$ and $\cQ'$ be the fractional fields of $\cR$, $\cR'$, respectively. We will consider the $T-$equivariant Chen-Ruan cohomology ring $H^*_{\CR,T}(X;\cQ')$.

The $T-$equivariant Poincar\'{e} pairing of $H^*_{\CR,T}(X;\cQ')$ is given by
$$
\langle \one_{(h)}, \one_{(h')}\rangle_{X,T} =\frac{1}{|C(h)|}\cdot\frac{\delta_{(h^{-1}),(h')} }{\be_h}\in \cQ'.
$$
The $T$-equivariant orbifold cup product  of $H^*_{\CR,T}(X;\cQ')$ is given by
$$
\one_{(h)} \star_{X} \one_{(h')} = \sum_{f\in(h),f'\in(h')}\frac{|C(ff')|}{|G|}\Bigl(\prod_{i=1}^r \sw_i^{c_i(h)+c_i(h')-c_i(ff')}\Bigr) \one_{(ff')}.
$$

Define
\begin{equation}\label{eqn:bar-one}
\bar{\one}_{(h)}:= \frac{\one_{(h)}}{\prod_{i=1}^r \sw_i^{c_i(h)}} \in H^*_{\CR,T}(X;\cQ').
\end{equation}
Then
$$
\langle \bar{\one}_{(h)}, \bar{\one}_{(h')}\rangle_{X,T} =
\frac{1}{|C(h)|}\cdot\frac{\delta_{(h^{-1}),(h')} }{\prod_{i=1}^r \sw_i}\in \cQ'
$$
and
$$
\bar{\one}_{(h)} \star_{X}\bar{\one}_{(h')} =\sum_{f\in(h),f'\in(h')}\frac{|C(ff')|}{|G|} \bar{\one}_{(ff')}.
$$

We now define a canonical basis for the semisimple algebra
$H^*_{\CR,T}(X;\cQ')$. Let $\{V_\alpha\}_{\alpha=1}^{|\Conj(G)|}$ be the set of irreducible representations of $G$ and let $\chi_\alpha$ be the character of $V_\alpha$. For any $\alpha$, define
$$\bar{\phi}_\alpha=\frac{\dim V_\alpha}{|G|}\sum_{(h)\in\Conj(G)}\chi_\alpha(h^{-1})\bar{\one}_{(h)}.$$
Let $\nu_\alpha=\bigl(\frac{\dim V_\alpha}{|G|}\bigr)^2,\alpha=1,\cdots,|\Conj(G)|$. Then we have
$$\langle \bar{\phi}_\alpha, \bar{\phi}_{\alpha'}\rangle_{X,T} =\delta_{\alpha,\alpha'}\frac{\nu_\alpha}{\prod_{i=1}^r \sw_i}$$
and
$$\bar{\phi}_\alpha\star_{X}\bar{\phi}_{\alpha'}=\delta_{\alpha,\alpha'}\bar{\phi}_\alpha.$$
Therefore, $\{\bar{\phi}_\alpha\}_{\alpha=1}^{|\Conj(G)|}$ is a canonical basis of the semisimple algebra
$H^*_{\CR,T}(X;\cQ')$.

\subsubsection{The general case}
In general, we apply the above construction to the local geometry around each fixed point $p_\sigma$ to get a set of cohomology classes $\{\bar{\phi}_{\sigma\alpha}\}_{\alpha=1}^{|\Conj(G_\sigma)|},\sigma=1,\cdots,n$. It is easy to show that
$$\langle \bar{\phi}_{\sigma\alpha}, \bar{\phi}_{\sigma'\alpha'}\rangle_{X,T} =\delta_{\sigma,\sigma'}\delta_{\alpha,\alpha'}\frac{\nu_{\sigma\alpha}}{\prod_{j=1}^r \sw_{\sigma j}}$$
and
$$\bar{\phi}_{\sigma\alpha}\star_{X}\bar{\phi}_{\sigma'\alpha'}=\delta_{\sigma,\sigma'}\delta_{\alpha,\alpha'}\bar{\phi}_{\sigma\alpha}.$$
Therefore, the algebra $H^*_{\CR,T}(X;\cQ')$ is still semisimple and $\{\bar{\phi}_{\sigma\alpha}|\sigma=1,\cdots,n,\alpha=1,\cdots,|\Conj(G_\sigma)|\}$ is a canonical basis of
$H^*_{\CR,T}(X;\cQ')$.

Sometimes, we also consider the normalized canonical basis $\{\tilde{\phi}_{\sigma\alpha}|\sigma=1,\cdots,n,\alpha=1,\cdots,|\Conj(G_\sigma)|\}$, where $\tilde{\phi}_{\sigma\alpha}$ is defined to be
$$\tilde{\phi}_{\sigma\alpha}=\sqrt{\frac{\prod_{j=1}^r \sw_{\sigma j}}{\nu_{\sigma\alpha}}}\bar{\phi}_{\sigma\alpha}.$$
With this definition, we have
$$\langle \tilde{\phi}_{\sigma\alpha}, \tilde{\phi}_{\sigma'\alpha'}\rangle_{X,T} =\delta_{\sigma,\sigma'}\delta_{\alpha,\alpha'}$$
i.e. $\tilde{\phi}_{\sigma\alpha}$ has unit length.

\section{Gromov-Witten theory, quantum cohomology and Frobenius manifolds}
In this section, we first recall some basic definitions of Gromov-Witten theory of a smooth Deligne-Mumford stack $X$. The foundation of Gromov-Witten theory of smooth Deligne-Mumford stacks is developed in \cite{AGV08}. The symplectic counterpart is developed in \cite{CR02}. After that, we specialized ourselves to the equivariant Gromov-Witten theory of a GKM orbifold $X$. Then we will focus on the genus zero case which determines the Frobenius structure on (the formal completion of) $H^*_{\CR,T}(X;\cQ')\otimes_\bC N(X)$ with $N(X)$ the Novikov ring. In particular, it gives us the structure of the quantum cohomology ring of $X$ and hence determines the quantum differential equation. We will discuss the semisimplicity of the Frobenius structure and the solutions to the quantum differential equation.

\subsection{Gromov-Witten theory of smooth Deligne-Mumford stacks}\label{orbifold GW}
Let $X$ be a smooth projective Deligne-Mumford stack. Let $E\subseteq H_2(X,\bZ)$ be the semigroup of effective curve classes. Define the Novikov ring $N(X)$ to be the completion of $\bC[E]$:
$$N(X)=\widehat{\bC[E]}=\{\sum_{\beta\in E}c_\beta Q^\beta|c_\beta\in\bC\}.$$
Denote by $\Mbar_{g,k}(X,\beta)$ the moduli space of degree $\beta$ stable maps to $X$ from genus $g$ curves with $k$ marked points. The difference from the smooth case is that the marked points and nodes of the domain curve are allowed to have orbifold structures with finite cyclic isotropy groups. There are $k$ orbifold line bundles $\bL_1,\cdots,\bL_k$ on $\Mbar_{g,k}(X,\beta)$. The fiber of $\bL_i$ at a point $[f:(C,x_1,\cdots,x_k)\to X]\in\Mbar_{g,k}(X,\beta)$ is the cotangent space at $x_i$. Let $X'$ be the coarse moduli space of $X$ and let $\Mbar_{g,k}(X',\beta)$ be the usual moduli space of stable maps to $X'$. We still have the corresponding line bundles $\bL'_1,\cdots,\bL'_k$ on $\Mbar_{g,k}(X',\beta)$. Let $\pi: \Mbar_{g,k}(X,\beta) \to \Mbar_{g,k}(X',\beta)$ be the natural map forgetting the orbifold structures. We define $\psi_i=\pi^*(c_1(\bL'_i)),i=1,\cdots,k$. There are evaluation maps $\ev_i:\Mbar_{g,k}(X,\beta)\to \cI X,i=1,\cdots,k$. Since the target of the evaluation maps is the inertia stack $\cI X$, the Chen-Ruan orbifold cohomology \cite{Chen-Ruan04} $H^*_{\CR}(X,\bC)$ plays the role of the state space. Let $\gamma_1,\cdots,\gamma_k\in H^*_{\CR}(X,\bC)$ and $a_1,\cdots,a_k\in\bZ_{\geq 0}$, define the orbifold Gromov-Witten invariants by the following correlator
$$\langle \tau_{a_1}\gamma_1,\cdots,\tau_{a_k}\gamma_k\rangle_{g,k,\beta}^X=
\int_{[\Mbar_{g,k}(X,\beta)]^\vir}
\prod_{i=1}^{k}((\ev_i^*\gamma_i)\psi_i^{a_i}).$$
where $[\Mbar_{g,k}(X,\beta)]^\vir$ is the virtual fundamental class as in the smooth case.

Suppose there exists an algebraic torus $T$ acting on $X$. Then the $T-$action on $X$ naturally induces a $T-$action on $\Mbar_{g,k}(X,\beta)$. Then the Gromov-Witten invariant $\langle \tau_{a_1}\gamma_1,\cdots,\tau_{a_k}\gamma_k\rangle_{g,k,\beta}^X$ can be computed via the following virtual localization formula (see \cite{Gra-Pan2}): If $\deg (\prod_{i=1}^{k}((\ev_i^*\gamma_i)\psi_i^{a_i})=\dim [\Mbar_{g,k}(X,\beta)]^\vir$,
$$
\langle \tau_{a_1}\gamma_1,\cdots,\tau_{a_k}\gamma_k\rangle_{g,k,\beta}^X
=\int_{[\Mbar_{g,k}(X,\beta)^T]^\vir}
\frac{\prod_{i=1}^{k}((\ev_i^*\gamma_i)\psi_i^{a_i})}{e_T(N^{\vir})}.
$$
Here $N^{\vir}$ is the virtual normal bundle of the fix locus $\Mbar_{g,k}(X,\beta)^T$ and the integrant is considered as in the $T-$equivariant cohomology ring.

The above virtual localization formula inspires people to define the equivariant Gromov-Witten invariants. Suppose there exists an algebraic torus $T$ acting on a possibly noncompact smooth Deligne-Mumford stack $X$. Let $\gamma_1,\cdots,\gamma_k\in H^*_{\CR,T}(X,\bC)$. Suppose that $\Mbar_{g,k}(X,\beta)^T$ is compact, then we define the $T-$equivariant Gromov-Witten invariant
$$
\langle \tau_{a_1}\gamma_1,\cdots,\tau_{a_k}\gamma_k\rangle_{g,k,\beta}^{X,T}
=\int_{[\Mbar_{g,k}(X,\beta)^T]^\vir}
\frac{\prod_{i=1}^{k}((\ev_i^*\gamma_i)\psi_i^{a_i})}{e_T(N^{\vir})}.
$$

\subsection{Gromov-Witten theory of GKM orbifolds}\label{GW-GKM}

\subsubsection{Correlators and their generating functions}

Let $X$ be a GKM orbifold. The $T-$action on $X$ naturally induces a $T-$action on $\Mbar_{g,k}(X,\beta)$. By standard virtual localization, it is easy to see that the fix locus $\Mbar_{g,k}(X,\beta)^T$ is a union of connected components labeled by decorated graphs. Each connected components is a product of spaces of the form $\Mbar_{h,n}(\cB G)$ for some finite group $G$ divided by the automorphism group. See \cite{Liu13} for this graph description for toric Deligne-Mumford stacks and \cite{Gra-Pan1} page 20-21 for smooth GKM manifolds. So $\Mbar_{g,k}(X,\beta)^T$ is compact.

For any nonnegative integer $a$, we consider the cohomology class $t_a=\sum_{\sigma,\alpha}t_a^{\sigma\alpha}\tilde{\phi}_{\sigma\alpha}\in H^*_{\CR,T}(X;\cQ')$. For convenience, we combine the two indices $\sigma,\alpha$ into a single index $\mu$ so that $\mu$ runs over the set $\Sigma_X:=\{(\sigma,\alpha)|\sigma=1,\cdots,n,\alpha=1,\cdots,|\Conj(G_\sigma)|\}$. So we can write $t_a$ as $t_a=\sum_{\mu\in\Sigma_X}t_a^{\mu}\tilde{\phi}_{\mu}\in H^*_{\CR,T}(X;\cQ')$. Define the genus $g$ correlator to be
$$\langle \bt(\psi_1),\cdots,\bt(\psi_k)\rangle_{g,k,\beta}^{X,T}=
\int_{[\Mbar_{g,k}(X,\beta)^T]^\vir}
\frac{\prod_{j=1}^{k}(\sum_{a=0}^{\infty}(\ev_j^*t_a)\psi_j^a)}{e_T(N^{\vir})}.$$
Here $N^{\vir}$ is the virtual normal bundle and $\psi_j$ is defined as in the last subsection. The insertion $\bt(\psi_j):=t_0+t_1\psi_j+t_2\psi_j^2+\cdots$ is viewed as a formal power series in $\psi_j$ with coefficients in $H^*_{\CR,T}(X;\cQ')$.

Let $t=\sum_{\mu\in\Sigma_X}t^{\mu}\tilde{\phi}_{\mu}\in H^*_{\CR,T}(X;\cQ')$. We will be interested in the following descendent potential with primary insertions:
$$F_{g,k}^{X,T}(\bt,t)=\sum_{s=0}^{\infty}\sum_{\beta\in E}\frac{Q^\beta}{s!}\langle \bt(\psi_1),\cdots,\bt(\psi_k),t,\cdots,t\rangle_{g,k+s,\beta}^{X,T}.$$
Sometimes we also denote $F_{g,k}^{X,T}(\bt,t)$ by the double bracket:
$$\llangle \bt(\psi_1),\cdots,\bt(\psi_k)\rrangle_{g,k}^{X,T}:=F_{g,k}^{X,T}(\bt,t).$$
We define the full descendent potential $\cD_X$ of $X$ to be
$$\cD_X:=\exp\big(\sum_{k\geq 0}\sum_{g\geq 0}\frac{\hbar^{g-1}}{k!}F_{g,k}^{X,T}(\bt,0)\big).$$

Let $\Mbar_{g,k}$ denote the moduli space of genus $g$ nodal curves with $k$ marked points. Consider the map $\pi:\Mbar_{g,k+s}(X,\beta)\to \Mbar_{g,k}$ which forgets the map to the target and the last $s$ marked points. Let $\bar\psi_i:=\pi^*(\psi_i)$ be the pull-backs of the classes $\psi_i, i=1,\cdots k$, from $\Mbar_{g,k}$. Then similarly we can define the ancestor potential with primary insertions to be
$$ \bar{F}_{g,k}^{X,T}(\bt,t)=\sum_{s=0}^{\infty}\sum_{\beta\in E}\frac{Q^\beta}{s!}\langle \bt(\bar{\psi}_1),\cdots,\bt(\bar\psi_k),t,\cdots,t\rangle_{g,k+s,\beta}^{X,T}.$$
We define the total ancestor potential $\cA_X(t)$ of $X$ to be
$$\cA_X(t):=\exp\big(\sum_{2g-2+k> 0}\frac{\hbar^{g-1}}{k!}\bar{F}_{g,k}^{X,T}(\bt,t)\big).$$
Note that the above sum is to make the moduli space $\Mbar_{g,k}$ be nonempty.

\subsubsection{The divisor equation}\label{divisor}
The following \emph{divisor equation} plays an important role in the study of Gromov-Witten theory:
\begin{eqnarray*}
&&\int_{[\Mbar_{g,k+1}(X,\beta)^T]^\vir}
\prod_{j=1}^{k}((\ev_j^*\gamma_j)\psi_j^{a_j})\cup\ev_{k+1}^*\gamma
=(\int_{\beta}\gamma)\int_{[\Mbar_{g,k}(X,\beta)^T]^\vir}
\prod_{j=1}^{k}((\ev_j^*\gamma_j)\psi_j^{a_j})+\\
&&\sum_{j=1}^{k}\int_{[\Mbar_{g,k}(X,\beta)^T]^\vir}
(\ev_1^*\gamma_1)\psi_1^{a_1}\cdots(\ev_{j-1}^*\gamma_{j-1})\psi_{j-1}^{a_{j-1}}
(\ev_j^*(\gamma_j\star_X\gamma))\psi_j^{a_j-1}
(\ev_{j+1}^*\gamma_{j+1})\psi_{j+1}^{a_{j+1}}\cdots(\ev_k^*\gamma_k)\psi_k^{a_k} ,
\end{eqnarray*}
where $\gamma\in H^2_{T}(X;\cQ')$ and $\gamma_1,\cdots,\gamma_k\in H^*_{\CR,T}(X;\cQ')$.
In particular, if there are only primary insertions, we only have the first term on the right hand side.

Moreover, we have the divisor equation for the ancestor invariants (see \cite{Man}):
$$
\int_{[\Mbar_{g,k+1}(X,\beta)^T]^\vir}
\prod_{j=1}^{k}((\ev_j^*\gamma_j)\bar{\psi}_j^{a_j})\cup\ev_{k+1}^*\gamma
=(\int_{\beta}\gamma)\int_{[\Mbar_{g,k}(X,\beta)^T]^\vir}
\prod_{j=1}^{k}((\ev_j^*\gamma_j)\bar{\psi}_j^{a_j}).
$$
For $t\in H^*_{\CR,T}(X;\cQ')$, let $t=t'+t''$ where $t'\in H^2_{T}(X;\cQ')$ and $t''$ is the linear combination of elements of degrees not equal to 2 and degree 2 twisted sectors. Here we view the equivariant variables in $H^*_{\CR,T}(X;\cQ')$ as degree 0 elements. Then it is easy to see that
$$ \bar{F}_{g,k}^{X,T}(\bt,t)=\sum_{s=0}^{\infty}\sum_{\beta\in E}\frac{Q^\beta e^{\int_{\beta}t'}}{s!}\langle \bt(\bar{\psi}_1),\cdots,\bt(\bar\psi_k),t'',\cdots,t''
\rangle_{g,k+s,\beta}^{X,T}.$$
Let $(t^{\mu'})$ be the coordinates of $t'$ with respect to a given basis $\{\gamma_{\mu'}\}$ in $H^2_{T}(X;\cQ')$ and let $(t^{\mu''})$ be the coordinates of $t''$ with respect to a given basis $\{\gamma_{\mu''}\}$.
Then $\bar{F}_{g,k}^{X,T}(\bt,t)$ is an element in $\cQ'[[Q^\beta\prod_{\mu'}e^{t^{\mu'}\int_{\beta}\gamma_{\mu'}},t^{\mu''}]]_{\beta\in E}[t^{\mu}_a]$. In particular, the Novikov variable $Q$ can be eliminated in this case. However, we do need the Novikov variable $Q$ when we study the descendent invariants.

\subsection{Frobenius manifolds and semisimplicity}

\subsubsection{Frobenius manifolds and reduction of the coefficient rings}\label{Frobenius}
In this section, we focus ourselves to the genus zero case. The genus zero data of $X$ determines a Frobenius structure on $H^*_{\CR,T}(X;\cQ')\otimes_\bC N(X)=H^*_{\CR,T}(X;\cQ'\otimes_\bC N(X))$ which is going to be proved to be semisimple. For a general introduction to Frobenius manifolds and quantum cohomology, the reader is referred to \cite{Man} and \cite{Lee-Pan}.

First we have the following definition of Frobenius algebra.
\begin{definition}
Let $A$ be a commutative and finitely generated free algebra over a ring $R$ with a unit $\one$. Then $A$ is called a Frobenius algebra over $R$ if it is equipped with an $R-$linear symmetric non-degenerate quadratic form $\langle,\rangle$ such that $\langle ab,c\rangle=\langle a,bc\rangle$ for any $a,b,c\in A$.

\end{definition}

\begin{example}
Recall that in Section \ref{CRcoh}, we defined the Chen-Ruan cohomology ring $H^*_{\CR,T}(X;\cQ')$, the orbifold product $\star_X$ and the $T$-equivariant Poincare pairing $\langle ,\rangle_{X,T}$. The triple $(H^*_{\CR,T}(X;\cQ'),\star_X,\langle ,\rangle_{X,T})$ is a Frobenius algebra over $\cQ'$.
\end{example}

Sometimes one may need to consider a family of Frobenius algebras, which are packaged in a nice way. This gives us the motivation to consider Frobenius manifolds.

\begin{definition}\label{Frobenius manifold}
A complex Frobenius manifold consists of four data $(M,g,A,\one)$, where
\begin{enumerate}
\item $M$ is a complex manifold of dimension $N$,
\item $g$ is a holomorphic, symmetric, non-degenerate quadratic form on the tangent bundle $TM$.
\item $A$ is a holomorphic and symmetric tensor,
$$ A: (TM)^{\otimes 3}\to \cO_M$$
with $\cO_M$ the structure sheaf of $M$,
\item $\one$ is a holomorphic vector field on $M$.

\end{enumerate}
These structures satisfy the following axioms:
\begin{enumerate}
\item The metric $g$ is flat,
\item For any point $p\in M$, there exists a neighborhood $U$ of $p$ with a basis of $g-$flat holomorphic vector fields $X_1,\cdots,X_N$ over $U$ and with a holomorphic potential function $\Phi$ on $U$ such that
    $$ A(X_i,X_j,X_k)=X_iX_jX_k(\Phi),$$
\item Let the product $\star$ on $TM$ be defined as
$$ g(X\star Y,Z)=A(X,Y,Z)$$
for any holomorphic vector fields $X,Y,Z$. Then we require that $\star$ is associative,
\item $\one$ is a $g-$flat vector field and is the unit for the product $\star$.

\end{enumerate}
\end{definition}
In Definition \ref{Frobenius manifold}, it is easy to see that for any $p\in M$, the tangent space $T_pM$ is a Frobenius algebra over $\bC$. So a Frobenius manifold can be viewed as a family of Frobenius algebras varying holomorphically.

One of the motivation for us to consider the Frobenius manifold is to study the quantum cohomology of $X$. Let $N_T(X):=\cQ'\otimes_\bC N(X)$ and let $H:=H^*_{\CR,T}(X;N_T(X))$. Given a point $t\in H$ and any two cohomology classes $a,b\in H^*_{\CR,T}(X;N_T(X))$, the \emph{quantum product} $\star_t$ of $a$ and $b$ at $t$ is defined to be
$$\langle a\star_t b,c\rangle_{X,T}=\llangle a,b,c\rrangle_{0,3}^{X,T},$$
where $c\in H^*_{\CR,T}(X;N_T(X))$ is any cohomology class. Here we view $a,b,c$ as tangent vector fields on $H$. Suppose that $\llangle a,b,c\rrangle_{0,3}^{X,T}$ converges at any point $t$ (instead of viewing $\{t^{\mu}\}$ as formal variables). Then the quantum product $\star_t$ gives us a product structure on the tangent space $T_tH$ which depends holomorphically on $t$. The quantum product is associative due to the WDVV equation.

So far, $H$ is equipped with the following structures, provided that $\llangle a,b,c\rrangle_{0,3}^{X,T}$ converges at any point $t$,
\begin{enumerate}
\item A flat metric $\langle ,\rangle_{X,T}$ which is given by the $T-$equivariant Poincar\'{e} pairing on
$H^*_{\CR,T}(X;N_T(X))$.

\item An associative commutative multiplication $\star_t$ satisfying
$\langle a\star_t b,c\rangle_{X,T}=\langle a, b\star_t c\rangle_{X,T}$, on the tangent space $T_tH$ which depends holomorphically on $t$ and the structure constant $\langle a\star_t b,c\rangle_{X,T}$ is the third derivative of the function $\llangle \rrangle_{0,3}^{X,T}$. In other words, the symmetric tensor $A$ in Definition \ref{Frobenius manifold} is given by $A(\frac{\partial}{\partial t^\mu},\frac{\partial}{\partial t^\nu},\frac{\partial}{\partial t^\rho})=\frac{\partial}{\partial t^\mu}\frac{\partial}{\partial t^\nu}\frac{\partial}{\partial t^\rho}\llangle \rrangle_{0,3}^{X,T}=\llangle \tilde{\phi}_{\mu},\tilde{\phi}_{\nu},\tilde{\phi}_{\rho} \rrangle_{0,3}^{X,T}$ and the potential function $\Phi$ is given by $\llangle \rrangle_{0,3}^{X,T}$.

\item A vector field $\one$ which is flat under the metric $\langle , \rangle_{X,T}$ and is the unit for the product structure.

\end{enumerate}
With the above structures, $H$ is almost a Frobenius manifold except that we have the following problems. The main problem is that in general we do not have the convergence of $\llangle a,b,c\rrangle_{0,3}^{X,T}$ with respect to $\{t^{\mu}\}$. Secondly, we have the Novikov variable $Q$ in $\llangle a,b,c\rrangle_{0,3}^{X,T}$ and if we view it as a fixed complex number, we also have the convergence issue for $Q$. Suppose the above two convergence conditions hold, then by letting the equivariant parameters in $\cQ'$ be some fixed complex numbers, $H$ will become a complex Frobenius manifold. In general, the way to deal with the above problems is to introduce Frobenius manifolds over any ring $R$ and make formal completion of $H$ around the origin. We will do this in Section \ref{Formal completion}.

Recall that in Section \ref{divisor}, we discussed the divisor equation in Gromov-Witten theory. By using the divisor equation, we can reduce the coefficient ring of $H$ in the following way. For any elements $a,b,c\in H^*_{\CR,T}(X;\cQ')$, we have seen in Section \ref{divisor} that the double correlator $\llangle a,b,c\rrangle_{0,3}^{X,T}$ is an element in $\cQ'[[Q^\beta\prod_{\mu'}e^{t^{\mu'}\int_{\beta}\gamma_{\mu'}},t^{\mu''}]]_{\beta\in E}$. We define a new product $a\circ_t b$ to be
$$\langle a\circ_t b,c\rangle_{X}=\llangle a,b,c\rrangle_{0,3}^{X,T}|_{Q=1},$$
where $c$ is any element in $H^*_{\CR,T}(X;\cQ')$. The right hand side $\llangle a,b,c\rrangle_{0,3}^{X,T}|_{Q=1}$ is an element in $\cQ'[[\prod_{\mu'}e^{t^{\mu'}\int_{\beta}\gamma_{\mu'}},t^{\mu''}]]_{\beta\in E}$. With this new product, the space $\cH:=H^*_{\CR,T}(X;\cQ')$ is a Frobenius manifold defined over $\cQ'$ (see Definition \ref{Frobenius manifold over R} below), provided that $\llangle a,b,c\rrangle_{0,3}^{X,T}|_{Q=1}$ converges at any point $t$. Later we will use $\cH$ to study the properties of $H$. We will also make formal completion of $\cH$ in Section \ref{Formal completion}

\subsubsection{Formal completion}\label{Formal completion}

In order to avoid the convergence issue, we will work with suitable formal completions of $H$ and $\cH$ instead of working with the whole Frobenius manifolds $H$ and $\cH$. First we give the definition of Frobenius manifolds over any ring $R$.

\begin{definition}\label{Frobenius manifold over R}
Let $R$ be a commutative $\bC$-algebra. A Frobenius manifold over $R$ is a quadruple $(M,g,A,\one)$, where
\begin{enumerate}
\item $M$ is a smooth $R-$scheme of relative dimension $N$,
\item $g$ is a $R$-linear, symmetric, non-degenerate quadratic form on the tangent bundle $TM$ over $R$.
\item $A$ is a $R$-linear symmetric tensor,
$$ A: (TM)^{\otimes 3}\to \cO_M$$
with $\cO_M$ the structure sheaf of $M$,
\item $\one$ is a vector field on $M$ over $R$.

\end{enumerate}
These structures satisfy the following axioms:
\begin{enumerate}
\item The metric $g$ is flat,
\item For any point $p\in M$, there exists a neighborhood $U$ of $p$ with a basis of $g-$flat vector fields $X_1,\cdots,X_N\in \Gamma(U,TU)$ and with a potential function $\Phi\in \Gamma(U,\cO_U)$ such that
    $$ A(X_i,X_j,X_k)=X_iX_jX_k(\Phi),$$
\item Let the product $\star$ on $TM$ be defined as
$$ g(X\star Y,Z)=A(X,Y,Z)$$
for any vector fields $X,Y,Z$. Then we require that $\star$ is associative,
\item $\one$ is a $g-$flat vector field and is the unit for the product $\star$.

\end{enumerate}
\end{definition}

Let $\hat{H}$ be the formal completion of $H$ at the origin:
\begin{eqnarray*}
\hat{H}:=\Spec \left(N_T(X)[[t^\mu]]\right).
\end{eqnarray*}
Let $\cO_{\hat{H}}$ be the structure sheaf of $\hat{H}$ and let $T\hat{H}$ be the tangent bundle of $\hat{H}$. Then $T\hat{H}$ is a free $\cO_{\hat{H}}$-module of rank $N=|\Sigma_X|$ generated by $\{\frac{\partial}{\partial t^{\mu}}\}_{\mu\in\Sigma_X}$. The formal manifold $\hat{H}$ is still equipped with the following structure:
\begin{enumerate}
\item A flat metric $\langle, \rangle_{X,T}$ on $T\hat{H}$ which is given by the $\cO_{\hat{H}}$-linear extension of the Poincar\'{e} pairing on
$H^*_{\CR,T}(X;N_T(X))$.

\item A potential function $\llangle \rrangle_{0,3}^{X,T}\in \cO_{\hat{H}}(\hat{H})$ satisfying the following properties. For any $a,b,c \in H^*_{\CR,T}(X;N_T(X))$, we view them as vector fields on $\hat{H}$. Then we have a $N_T(X)$-linear symmetric tensor
    \begin{eqnarray*}
    (T\hat{H})^{\otimes 3} &\to& \cO_{\hat{H}}\\
    a\otimes b\otimes c &\mapsto& abc(\llangle \rrangle_{0,3}^{X,T})=\llangle a,b,c \rrangle_{0,3}^{X,T}.
    \end{eqnarray*}
    The multiplication $\star_t$ defined by $\langle a\star_t b,c\rangle_{X,T}=\llangle a,b,c \rrangle_{0,3}^{X,T}$ is associative and commutative.

\item A vector field $\one$ which is flat under the metric $\langle , \rangle_{X,T}$ and is the unit for the product structure.

\end{enumerate}
With the above structures, $\hat{H}$ is a formal Frobenius manifold over the ring $N_T(X)$. It is the formal completion of $H$ at the origin. The set of global sections $\Gamma(\hat{H},T\hat{H})$ is a free $\cO_{\hat{H}}(\hat{H})$-module of rank $N$:
$$
\Gamma(\hat{H},T\hat{H})=
\bigoplus_{\mu\in\Sigma_X}\cO_{\hat{H}}(\hat{H})\frac{\partial}{\partial t^{\mu}}.
$$
Under the quantum product $\star_t$, the triple $(\Gamma(\hat{H},T\hat{H}),\star_t,\langle, \rangle_{X,T})$ is a Frobenius algebra over the ring $\cO_{\hat{H}}(\hat{H})=N_T(X)[[t^\mu]]$. The triple $(\Gamma(\hat{H},T\hat{H}),\star_t,\langle, \rangle_{X,T})$ is called the \emph{quantum cohomology} of $X$ and is denoted by $QH^*_{\CR,T}(X)$. It plays the role of the Frobenius algebra $(T_tH,\star_t,\langle, \rangle_{X,T})$ for a general point $t\in H$. We say that $\hat{H}$ is semisimple if $QH^*_{\CR,T}(X)$ is semisimple.

Similarly, we define $\hat\cH$ to be
\begin{eqnarray*}
\hat{\cH}:&=&\Spec \left(\cQ'[[\prod_{\mu'}e^{t^{\mu'}\int_{\beta}\gamma_{\mu'}}]]_{\beta\in E}[[t^{\mu''}]][t^{\mu'}]\right)
\end{eqnarray*}
Note that since $E\subseteq H_2(X,\bZ)$ is finitely generated, we only have finitely many variables in the ring $\cQ'[[\prod_{\mu'}e^{t^{\mu'}\int_{\beta}\gamma_{\mu'}}]]_{\beta\in E}[[t^{\mu''}]][t^{\mu'}]$. The manifold $\hat{\cH}$ is not a formal Frobenius manifold in the sense of \cite{Lee-Pan}. However it is easy to see that $\hat\cH$ still satisfies the properties (1) (2) (3) above with the potential function $\llangle \rrangle_{0,3}^{X,T}|_{Q=1}$ and the metric $\langle , \rangle_{X,T}$, which means that $\hat\cH$ is a Frobenius manifold defined over $\cQ'[[\prod_{\mu'}e^{t^{\mu'}\int_{\beta}\gamma_{\mu'}}]]_{\beta\in E}$. The Frobenius manifold $\hat{\cH}$ can be viewed as the formal completion of $\cH$ in the directions $\{\frac{\partial}{\partial t^{\mu''}}\}$ along the subspace $H^2_{\CR,T}(X;\cQ')$, together with adding ``admissible" functions $\{\prod_{\mu'}e^{t^{\mu'}\int_{\beta}\gamma_{\mu'}}\}_{\beta\in E}$. The functions $\prod_{\mu'}e^{t^{\mu'}\int_{\beta}\gamma_{\mu'}}$ plays the role of the Novikov variables. We say that $\hat{\cH}$ is semisimple if $(\Gamma(\hat{\cH},T\hat{\cH}),\star_t,\langle, \rangle_{X,T})$ is a semisimple Frobenius algebra over the ring $\cO_{\hat{\cH}}(\hat{\cH})$.

We will work with $\hat{H}$ and $\hat\cH$ in the later context.

\subsubsection{The canonical coordinates}

Notice that at the origin $t=0,Q=0$, the quantum product $\star_0$ is just the classical equivariant orbifold product $\star_X$ introduced in Section 2. The classical equivariant Chen-Ruan cohomology $H^*_{\CR,T}(X;\cQ')$ is semisimple as we proved in Section 2. The criterion of semi-simplicity (see Lemma 18 in \cite{Lee-Pan}) in fact implies the semi-simplicity of the quantum cohomology $QH^*_{\CR,T}(X)$. For completeness, we review the criterion of semi-simplicity. Let $I\subset N_T(X)[[t^\mu]]$ be the ideal generated by $Q,t^\mu$ and define
$$
S_n:=N_T(X)[[t^\mu]]/I^n,\quad A_n=QH^*_{\CR,T}(X)\otimes_{N_T(X)[[t^\mu]]}S_n.
$$
Then we have $A_n$ is a free $S_n-$module of rank $N$ and the ring structure $\star_t$ on $QH^*_{\CR,T}(X)$ induces a ring structure $\star_{\underline{n}}$ on $A_n$. In particular,
$$
A_1=H^*_{\CR,T}(X;\cQ'),\quad S_1=\cQ',\quad\star_{\underline{1}}=\star_X.
$$
So we know that there exists a canonical basis
$$
\{\bar\phi_\mu^{(1)}:=\bar\phi_\mu|\mu\in\Sigma_X\}
$$
for $A_1$. For $n\geq 1$, let $\{\bar\phi_\mu^{(n+1)}|\mu\in\Sigma_X\}$ be the unique canonical basis of $A_{n+1}$ which is the lift of the canonical basis $\{\bar\phi_\mu^{(n)}|\mu\in\Sigma_X\}$ of $A_{n+1}$ \cite[Lemma 16]{Lee-Pan}. Then
$$
\{\bar\phi_\mu(Q,t):=\lim \bar\phi_\mu^{(n)}|\mu\in\Sigma_X\}
$$
is a canonical basis of $QH^*_{\CR,T}(X)$ i.e. $QH^*_{\CR,T}(X)$ is semisimple.

Therefore the Frobenius manifold $\hat{H}$ is semisimple. So there exists a system of \emph{canonical} coordinates $\{u^i(t)\}_{i=1}^{N}$ on $\hat{H}$, where $N=\dim H^*_{\CR,T}(X;\cQ')=\sum_{\sigma}|\Conj(G_\sigma)|$, characterized by the property that the corresponding vector fields $\{\partial/\partial u^i\}_{i=1}^{N}$ form a canonical basis of the quantum product $\star_t$. This means that
$$
\partial/\partial u^i\star_t\partial/\partial u^j=\delta_{ij}.
$$
Similarly, the Frobenius manifold $\hat\cH$ is also semisimple and there exists a system of canonical coordinates $\{v^i(t)\}_{i=1}^{N}$ on $\hat\cH$.

The canonical coordinates $\{u^i(t)\}_{i=1}^{N}$ are defined uniquely up to reorderings, signs, and additive constants in $N_T(X)$. Similar uniqueness holds for $\{v^i(t)\}_{i=1}^{N}$ with additive constants in $\cQ'$. Notice that $v^i(t)$ is an element in $\cQ'[[\prod_{\mu'}e^{t^{\mu'}\int_{\beta}\gamma_{\mu'}},t^{\mu''}]]_{\beta\in E}[t^{\mu'}]$. We choose $\{v^i(t)\}_{i=1}^{N}$ such that the constant terms are zero. Notice that $\{v^i(Q^\beta\prod_{\mu'}e^{t^{\mu'}\int_{\beta}\gamma_{\mu'}},t^{\mu'},t^{\mu''})\}_{i=1}^{N}$ is a system of canonical coordinates of $\hat{H}$.
We choose the canonical coordinates $\{u^i(t)\}_{i=1}^{N}$ such that $u^i(t)=v^i(Q^\beta\prod_{\mu'}e^{t^{\mu'}\int_{\beta}\gamma_{\mu'}},t^{\mu'},t^{\mu''})$. This indicates that $u^i(t)$ is an element in $\cQ'[[Q^\beta\prod_{\mu'}e^{t^{\mu'}\int_{\beta}\gamma_{\mu'}},t^{\mu''}]]_{\beta\in E}[t^{\mu'}]$ and the constant term is zero.

Let $\Delta_i:=\frac{1}{\langle\partial/\partial u^i,\partial/\partial u^i \rangle_{X,T}}$. Denote by $\Psi$ the transition matrix between flat and normalized canonical basis: $\Delta_i^{-\frac{1}{2}}du^i=\sum_{\mu\in\Sigma_X}\Psi^{\,\ i}_{\mu}dt^{\mu}$. Here we use the convention that the left index of a matrix is for the rows and the right index is for the columns. The entries of $\Psi$ are elements in $\cQ'[[Q^\beta\prod_{\mu'}e^{t^{\mu'}\int_{\beta}\gamma_{\mu'}},t^{\mu''}]]_{\beta\in E}$. It is easy to see that $\Psi^{-1}=\Psi^T$ where $\Psi^T$ is the transpose of $\Psi$.

\subsection{Solutions to the quantum differential equations}\label{sol}
We consider the Dubrovin connection $\nabla^{\hat{H}}$ on the tangent bundle $T\hat{H}$:
$$\nabla^{\hat{H}}_{\mu}=\frac{\partial}{\partial t^{\mu}}-\frac{1}{z}\tilde{\phi}_{\mu}\star_t$$
for any $\mu\in\Sigma_X$. Here $z$ is a formal variable. The equation $\nabla^{\hat{H}} \tau(z)=0$ for a section $\tau(z)\in\Gamma(\hat{H}, T\hat{H})_z$ is called the \emph{quantum differential equation}. Here the section $\tau(z)$ and the space $\Gamma(\hat{H}, T\hat{H})_z$ are parametrized by the formal variable $z$. In later context, there will be different meanings for the parametrization of $z$. For example, we will consider $\tau(z)$ as power series in $1/z$ and in this case $\Gamma(\hat{H}, T\hat{H})_z$ is equal to $\Gamma(\hat{H}, T\hat{H})[[1/z]]$. On the other hand, we will also consider another type of parametrization in Theorem \ref{fundamental} in which case $\tau(z)$ is a Laurent polynomial in $z$ and $\Gamma(\hat{H}, T\hat{H})_z$ is equal to $\Gamma(\hat{H}, T\hat{H})((z))$. Therefore, the section $\tau(z)$ lies in different spaces in these cases.

In the same way, we can also define the Dubrovin connection $\nabla^{\hat{\cH}}$ on the tangent bundle $T\hat{\cH}$ to be
$$\nabla^{\hat{\cH}}_{\mu}=\frac{\partial}{\partial t^{\mu}}-\frac{1}{z}\tilde{\phi}_{\mu}\circ_t,$$
and we get the quantum differential equation $\nabla^{\hat{\cH}} \tau(z)=0$ on $\hat{\cH}$.

Let
$$
(T\hat{H})^{f,z}\subset T\hat{H}_z
$$
be the subsheaf of $\nabla^{\hat{H}}$-flat sections. Again there will be different parametrization of $z$ on $T\hat{H}_z$, which may be equal to $T\hat{H}\otimes_{\bC}\bC[[1/z]]$ or $T\hat{H}\otimes_{\bC}\bC((z))$ in later context. Then $(T\hat{H})^{f,z}$ is a sheaf of $N_T(X)\otimes_{\bC}\bC[[1/z]]$-modules or of $N_T(X)\otimes_{\bC}\bC((z))$-modules of rank $N$. When $z=\infty$, the Dubrovin connection $\nabla^{\hat{H}}$ becomes the Levi-Civita connection and $(T\hat{H})^{f,\infty}$ is the sheaf of flat vector fields.

Suppose we have a section $S\in\End(T\hat{H})=\Gamma(\hat{H},T\hat{H}\otimes T\hat{H}^\star)$. Then it defines a map
$$
S:\Gamma(\hat{H},T\hat{H})\to\Gamma(\hat{H},T\hat{H})
$$
from the free $\cO_{\hat{H}}(\hat{H})$-module $\Gamma(\hat{H},T\hat{H})$ to itself. Suppose we have a family of sections $S(z)\in\End(T\hat{H})_z$ parametrized by $z$. Again there will be different parametrization of $z$ on $S(z)$ and on $\End(T\hat{H})_z$ in later context. The section $S(z)$ may be a power series in $1/z$ or a product of operators as described in Theorem \ref{fundamental} which is a Laurent polynomial in $z$. The operator $S(z)$ is called a \emph{fundamental solution} to the quantum differential equation $\nabla^{\hat{H}} \tau(z)=0$ if the $\cO_{\hat{H}}(\hat{H})$-linear map
$$
S(z):\Gamma(\hat{H},T\hat{H})\to\Gamma(\hat{H},T\hat{H})_z
$$
restricts to a $N_T(X)$-linear injection
$$
S(z):\Gamma(\hat{H},(T\hat{H})^{f,\infty})\to\Gamma(\hat{H},(T\hat{H})^{f,z})_z
$$
between $N_T(X)$-modules and the image spans $\Gamma(\hat{H},(T\hat{H})^{f,z})_z$. The fundamental solution to the quantum differential equation $\nabla^{\hat{\cH}} \tau(z)=0$ is defined in a similar way.

Now let us describe one special fundamental solution, which connects the ancestor potential and the descendent potential of $X$ in Theorem \ref{ancestor-descendent}. Consider the operator $S_t$ defined as follows: for any $a,b\in H^*_{\CR,T}(X;N_T(X))$,
$$\langle a,S_tb\rangle_{X,T}=\langle a,b \rangle_{X,T}+\llangle a,\frac{b}{z-\psi}\rrangle_{0,2}^{X,T}.$$
The operator $S_t$ satisfies the following nice property: for any $a\in H^*_{\CR,T}(X;N_T(X))$, if we view $a$ as a flat vector field on $\hat{H}$, the section $S_t a$ satisfies the quantum differential equation i.e. we have
$$\nabla^{\hat{H}} S_t a=0.$$
This means that $S_t$ is a fundamental solution to the quantum differential equation. The proof for $S_t$ being a fundamental solution can be found in \cite{Cox-Kat} for the smooth case and in \cite{Iri} for the orbifold case which is a direct generalization of the smooth case.

The operator $S_t=\one+S_1/z+S_2/z^2+\cdots$ is a formal power series in $1/z$ with operator-valued coefficients.

\section{quantization of quadratic Hamiltonians}
In this section, we review the basic concepts of the quantization of quadratic Hamiltonians (see \cite{Giv3} for more details). The quantization procedure provides a way to recover the higher genus theory from the genus zero data which we will use in the next section.

\subsection{Symplectic space formalism}
So far, we have been working on (a formal neighborhood of) the space $H=H^*_{\CR,T}(X;N_T(X))$ which provides us the Frobenius structure and state space of the corresponding Gromov-Witten theory. When we consider the descendent theory of $X$, however, additional parameters are needed. As we have seen in section 3.1, the insertion $\bt(\psi)=t_0+t_1\psi+t_2\psi^2+\cdots$ is a formal power series in $\psi$ with an integer index that keeps track in the power of $\psi$. Similarly, the $S-$operator $S_t$ studied in the previous section is a formal power series in $1/z$. These phenomena lead to the study of the symplectic space formalism.

Let $z$ be a formal variable. We consider the space $\bH$ which is the space of Laurent polynomials in one variable $z$ with coefficients in $H$. We define the symplectic form $\Omega$ on $\bH$ by
$$\Omega(f,g)=\Res_{z=0}\langle f(-z),g(z))\rangle_{X}dz$$
for any $f,g\in\bH$. Note that we have $\Omega(f,g)=-\Omega(g,f)$. There is a natural polarization $\bH=\bH_+\oplus \bH_-$ corresponding to the decomposition $f(z,z^{-1})=f_+(z)+f_-(z^{-1})z^{-1}$ of laurent polynomials into polynomial and polar parts. It is easy to see that $\bH_+$ and $\bH_-$ are both Lagrangian subspaces of $\bH$ with respect to $\Omega$.

Introduce a Darboux coordinate system $\{p^\mu_a,q^\nu_b\}$ on $\bH$ with respect to the above polarization. This means that we write a general element $f\in\bH$ in the form
$$\sum_{a\geq 0,\mu\in\Sigma_X}p^\mu_a\tilde{\phi}^\mu(-z)^{-a-1}+\sum_{b\geq 0,\nu\in\Sigma_X}q^\nu_b\tilde{\phi}_\nu z^b,$$
where $\{\tilde{\phi}^\mu\}$ is the dual basis of $\{\tilde{\phi}_\mu\}$. Denote
\begin{eqnarray*}
\bp(z):&=&p_0(-z)^{-1}+p_1(-z)^{-2}+\cdots\\
\bq(z):&=&q_0z+q_1z^2+\cdots,
\end{eqnarray*}
where $p_a=\sum_\mu p^\mu_a \tilde{\phi}^\mu$ and $q_b=\sum_\mu q^\nu_b\tilde{\phi}_\nu$.

Recall that when we discussed the Gromov-Witten theory of $X$, we introduced the formal power series $\bt(z)=t_0+t_1z+t_2z^2+\cdots$. With $z$ replaced by $\psi$, $\bt$ appears as the insertion in the genus $g$ correlator. We relate $\bt(z)$ to the Darboux coordinates by introducing the \emph{dilaton shift}: $\bq(z)=\bt(z)-\one z$. The dilaton shift appears naturally in the quantization procedure. We will explain this phenomenon as a group action on Cohomological field theories in the next section.

\subsection{Quantization of quadratic Hamiltonians}
Let $A:\bH\to\bH$ be a linear infinitesimally symplectic transformation, i.e. $\Omega(Af,g)+\Omega(f,Ag)=0$ for any $f,g\in\bH$. Under the Darboux coordinates, the quadratic Hamiltonian
$$f\to\frac{1}{2}\Omega(Af,f)$$
is a series of homogeneous degree two monomials in $\{p^\mu_a,q^\nu_b\}$. Let $\hbar$ be a formal variable and define the quantization of quadratic monomials as
$$\widehat{q^\mu_aq_b^\nu}=\frac{q^\mu_aq_b^\nu}{\hbar},\widehat{q^\mu_ap_b^\nu}=q^\mu_a\frac{\partial}{\partial q^\nu_b},
  \widehat{p^\mu_ap_b^\nu}=\hbar \frac{\partial}{\partial q^\mu_a}\frac{\partial}{\partial q^\nu_b}.
$$
We define the quantization $\widehat{A}$ by extending the above equalities linearly. The differential operators $\widehat{q^\mu_aq_b^\nu},\widehat{q^\mu_ap_b^\nu},\widehat{p^\mu_ap_b^\nu}$ act on the so called Fock space \emph{Fock} which is the space of formal functions in $\bt(z)\in\bH_+$. For example, the descendent potential and ancestor potential are regarded as elements in \emph{Fock}. The quantization operator $\widehat{A}$ does not act on \emph{Fock} in general since it may contain infinitely many monomials. However, the actions of quantization operators in our paper are well-defined. The quantization of a symplectic transform of the form $\exp(A)$, with $A$ infinitesimally symplectic, is defined to be $\exp(\widehat{A})=\sum_{n\geq 0}\frac{\widehat{A}^n}{n!}$.

\section{Higher genus structure}
In this section, we recover the higher genus data of a GKM orbifold $X$ by its Frobenius structure. Recall that in section 3.2, we have proved that the Frobenius manifolds $\hat{H}$ and $\hat\cH$ of any GKM orbifold $X$ are semisimple. This means that the underlining cohomological field theory, which comes from the Gromov-Witten theory of $X$, is semisimple. So we can use Teleman's result \cite{Tel}, which classifies the 2D semisimple field theories, to express the ancestor potential of $X$ in terms of certain group action (which is basically the action of quantization operators) on the trivial cohomological field theory with the same Frobenius structure. The key point here is that the cohomological field theory that we are considering is not conformal since we are working with the equivariant Gromov-Witten theory. So the ambiguity with the corresponding group element, which acts on the trivial cohomological field theory, cannot be fixed by the usual method using Euler vector field. Instead, we will fix the ambiguity by studying the degree 0 case of the ancestor potential and by using the structure of the solution space to the quantum differential equation. We will study this ambiguity in the next section. In the end, the descendent potential is related to the ancestor potential by the $S-$operator $S_t$ in the standard way (see \cite{Giv3} and \cite{Coa}).

\subsection{The quantization procedure and group actions on cohomological field theories}
Recall that in section 3.2, we defined the transition matrix $\Psi$ between flat and normalized canonical basis: $\Delta_i^{-\frac{1}{2}}du^i=\sum_{\mu\in\Sigma_X}\Psi^{\,\ i}_{\mu}dt^{\mu}$. If we view $\Psi$ as an operator on $H^*_{\CR,T}(X;N_T(X))$ which sends the flat basis $\{\tilde{\phi}_\mu\}$ to the normalized canonical basis $\{\Delta_i^{\frac{1}{2}}\frac{\partial}{\partial u^i}\}$, then $(\Psi^{\,\ i}_{\mu})$ is the corresponding matrix expression under the basis $\{\tilde{\phi}_\mu\}$. Note that when $t=0,Q=0$, the normalized canonical basis $\{\Delta_i^{\frac{1}{2}}\frac{\partial}{\partial u^i}\}$ coincides with the flat basis $\{\tilde{\phi}_\mu\}$. So we have a canonical 1-1 correspondence between the two sets $\{\Delta_i^{\frac{1}{2}}\frac{\partial}{\partial u^i}\}$ and $\{\tilde{\phi}_\mu\}$. Under this correspondence, we can use the same index ($i$ or $\mu$) for both of the two sets of basis. Sometimes we also denote $\Delta_i^{\frac{1}{2}}\frac{\partial}{\partial u^i}$ by $\tilde{\phi}_i(t,Q)$. Therefore we have
$$\tilde{\phi}_i(0,0)= \tilde{\phi}_i.$$
Let $U$ denote the diagonal matrix $\diag(u^1,\cdots,u^N)$. Using the above correspondence, we can define the operator $e^{U/z}$ which has the matrix expression $e^{U/z}$ under the basis $\{\tilde{\phi}_\mu\}$.

Now we can state the following theorem which characterizes the solution space of the quantum differential equation. Recall that there is a formal variable $z$ in the quantum differential equation, which can give different parametrization on the fundamental solutions. \emph{In the following theorem, we consider the fundamental solutions with entries in $\cQ'((z))[[Q^\beta\prod_{\mu'}e^{t^{\mu'}\int_{\beta}\gamma_{\mu'}},t^{\mu''}]]_{\beta\in E}[t^{\mu'}]$}. The proof of this theorem can be found in \cite{D} \cite{G97} \cite{Giv2}.

\begin{theorem}\label{fundamental}
\begin{enumerate}
\item The quantum differential equation $\nabla^{\hat{H}} \tau(z)=0$ has a fundamental solution in the form: $\tilde{S}=\Psi \tR(z)e^{U/z}$ with entries in $\cQ'((z))[[Q^\beta\prod_{\mu'}e^{t^{\mu'}\int_{\beta}\gamma_{\mu'}},t^{\mu''}]]_{\beta\in E}[t^{\mu'}]$, where $\tR(z)=\one+\tR_1z+\tR_2z^2+\cdots$ is a formal matrix power series satisfying the unitary condition $\tR^*(-z)\tR(z)=\one$, where $\tR^*$ is the adjoint of $\tR$.\\

\item The series $\tR(z)$ satisfying the unitary condition in (1) is unique up to right multiplication by diagonal matrices $\exp(a_1z+a_3z^3+a_5z^5+\cdots)$ where $a_{2k-1}$ are constant diagonal matrices.\\

\item In the case of conformal Frobenius manifolds the series $\tR(z)$ satisfying the unitary condition in (a) is uniquely determined by the homogeneity condition $(z\partial_z+\sum u^i\partial_{u^i})\tR(z)=0$.
\end{enumerate}
\end{theorem}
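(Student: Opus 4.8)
\textbf{Proof proposal for Theorem \ref{fundamental}.}

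The plan is to treat the three parts in sequence, since each builds on the previous one. For part (1), I would start from the observation that the Frobenius manifold $\hat{H}$ is semisimple (proved in Section 3.2), so on a formal neighborhood of the origin there exist canonical coordinates $u^1,\dots,u^N$ and the transition matrix $\Psi$. Passing to the normalized canonical frame $\tilde\phi_i(t,Q)=\Delta_i^{1/2}\partial/\partial u^i$, the Dubrovin connection takes the form $\frac{\partial}{\partial u^i}-\frac{1}{z}(\text{diagonal}) - (\text{connection 1-form})$, and one seeks a gauge transformation $\tR(z)$ conjugating it to the connection $d - \frac{1}{z}dU$, whose flat sections are manifestly $e^{U/z}$. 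Writing $\tilde S = \Psi\tR(z)e^{U/z}$ and plugging into $\nabla^{\hat H}\tilde S=0$ produces, order by order in $z$, a recursion: the $z^0$ term fixes the commutator $[\tR_k, dU]$ in terms of lower $\tR_j$'s (this determines the off-diagonal part of $\tR_k$), and the $z^1$ term (together with the flatness/associativity of $\star_t$) determines the diagonal part up to a constant. One must check the recursion is consistent — this is where the WDVV equations, encoded in the flatness of $\nabla^{\hat H}$ for all $z$, are used — and that the solution has entries in the stated ring $\cQ'((z))[[Q^\beta\prod_{\mu'}e^{t^{\mu'}\int_\beta\gamma_{\mu'}},t^{\mu''}]]_{\beta\in E}[t^{\mu'}]$, which follows because $\Psi$ and $U$ already have entries there. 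The unitarity condition $\tR^*(-z)\tR(z)=\one$ is then a consequence of the fact that $\tilde S$ preserves the pairing $\langle\,,\rangle_{X,T}$ (the pairing is $\nabla^{\hat H}$-flat in the appropriate symplectic sense), combined with the normalization at $z=\infty$ where $\tR(\infty)=\one$.

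For part (2), I would analyze the ambiguity directly. Suppose $\tR(z)$ and $\tR'(z)$ both satisfy the recursion from part (1) and the unitarity condition. Then $C(z):=\tR(z)^{-1}\tR'(z)$ must commute with the diagonal connection $d-\frac{1}{z}dU$, hence $C(z)$ is a \emph{constant} (in $t$ and $Q$) diagonal matrix — the off-diagonal entries are killed because the $u^i$ are distinct, and the $u^i$-dependence is killed because $C(z)$ commutes with $dU$. Unitarity of both $\tR$ and $\tR'$ forces $C^*(-z)C(z)=\one$; writing $C(z)=\exp(\sum_{k\ge 1} a_k z^k)$ with $a_k$ constant diagonal, this becomes $\exp(\sum a_k(-z)^k)^{*}\exp(\sum a_k z^k)=\one$, i.e. $\sum_k a_k((-z)^k+z^k)=0$ (using that the $a_k$ are diagonal, hence self-adjoint and mutually commuting, so the adjoint just flips the sign of $z$), which kills exactly the even-degree terms and leaves $C(z)=\exp(a_1 z + a_3 z^3 + \cdots)$. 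This is the claimed freedom.

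For part (3), in the conformal case one has an Euler vector field and a grading operator, and the homogeneity condition $(z\partial_z + \sum u^i\partial_{u^i})\tR(z)=0$ becomes an extra linear constraint on $\tR$. I would check that this constraint is incompatible with any nontrivial right multiplication by $\exp(a_1 z + a_3 z^3 + \cdots)$: applying $z\partial_z + \sum u^i\partial_{u^i}$ to $\tR(z)\exp(\sum_{k\text{ odd}} a_k z^k)$ and demanding the result vanish, and using that $\tR(z)$ itself already satisfies the homogeneity equation, forces $\sum_{k\text{ odd}} k\, a_k z^k = 0$, hence all $a_k=0$. Thus the homogeneity condition pins down $\tR(z)$ uniquely among the unitary solutions.

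The main obstacle I expect is part (1): establishing that the formal gauge transformation $\tR(z)$ genuinely exists — i.e., that the order-by-order recursion is consistent at every step — rather than just formally writing it down. This consistency is precisely the content of the flatness of the Dubrovin connection for all values of $z$ (equivalently the WDVV equations for the genus-zero potential), and one has to invoke it carefully; everything else (unitarity, the ring of definition, and parts (2)–(3)) is comparatively routine linear algebra once $\tR(z)$ is in hand. Since this is classical (the cited references \cite{D}, \cite{G97}, \cite{Giv2} carry it out in the Frobenius-manifold setting and nothing new is needed for the equivariant orbifold case beyond the semisimplicity already proved), I would likely present the argument in the form above and defer the detailed recursion to those references.
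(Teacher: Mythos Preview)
Your proposal is correct and follows the standard Dubrovin--Givental argument. Note that the paper itself does not supply a proof of this theorem at all: it simply states the result and writes ``The proof of this theorem can be found in \cite{D} \cite{G97} \cite{Giv2}.'' Your sketch is precisely the argument carried out in those references (the recursion for $\tR_k$ from $[\tR_k,dU]$, consistency via flatness/WDVV, unitarity from the pairing, and the analysis of the constant diagonal ambiguity), so you are in complete agreement with the paper's approach, only more explicit.
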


From this theorem, we know that there are ambiguities with the $R$-matrix in the fundamental solution $\tilde{S}$ if we work with non-conformal Frobenius manifolds. However, the ambiguity is a \emph{constant} matrix and so we can fix it by studying the case when $t=0,Q=0$. This will be done in the next section.

\begin{remark}
The fundamental solution $\tilde{S}$ in Theorem \ref{fundamental} is viewed as a matrix with entries in $\cQ'((z))[[Q^\beta\prod_{\mu'}e^{t^{\mu'}\int_{\beta}\gamma_{\mu'}},t^{\mu''}]]_{\beta\in E}[t^{\mu'}]$, where $\{t^{\mu'}\}$ is a coordinate system of $H^2_T(X,\bC)$. Since we choose the canonical coordinates $\{u^i(t)\}_{i=1}^{N}$ such that they vanish when $Q=0,t=0$, if we fix the powers of $Q$ and $t^\mu,\mu\in\Sigma_X$, only finitely many terms in the expansion of $e^{U/z}$ contribute. So the multiplication $\Psi \tR(z)e^{U/z}$ is well defined and the result matrix indeed has entries in $\cQ'((z))[[Q^\beta\prod_{\mu'}e^{t^{\mu'}\int_{\beta}\gamma_{\mu'}},t^{\mu''}]]_{\beta\in E}[t^{\mu'}]$.
\end{remark}

\begin{remark}[the divisor equation] One should also notice that in the quantum differential equation $\nabla^{\hat{H}} \tau(z)=0$, we can apply the divisor equation (see \cite{Man}) to the quantum product. Then we will get the corresponding quantum differential equation $\nabla^{\hat{\cH}} \tau(z)=0$ for the Frobenius manifold $\hat\cH$. So we can in fact eliminate the Novikov variable $Q$ and get a version of Theorem \ref{fundamental} for the Frobenius manifold $\hat\cH$. In this case, the solution $\tilde{S}$ is viewed as a matrix with entries in $\cQ'((z))[[\prod_{\mu'}e^{t^{\mu'}\int_{\beta}\gamma_{\mu'}},t^{\mu''}]]_{\beta\in E}[t^{\mu'}]$, where $\{t^{\mu'}\}$ is a coordinate system of $H^2_T(X,\bC)$ and $\{t^\mu\}$ is a coordinate system of $H^*_{\CR,T}(X,\bC)$. So the Novikov variable $Q$ in Theorem \ref{fundamental} just rescales $\prod_{\mu'}e^{t^{\mu'}\int_{\beta}\gamma_{\mu'}}$ and in particular we can see that the constant matrices $a_{2k-1}$ in (2) of Theorem \ref{fundamental} are also independent of $Q$.

By the divisor equation, the Novikov variable $Q$ is also redundant when we consider the ancestor potential $\cA_X(t)$ and its quantization formula (see Theorem \ref{ancestor}). However the Novikov variable $Q$ is necessary in the definition of the descendent potential in which case the divisor equation has descendent correction terms.

\end{remark}

The operator $\tR$ in Theorem \ref{fundamental} plays a central role in the quantization procedure.

Recall that in Section \ref{Formal completion}, we defined the quantum cohomology $QH^*_{\CR,T}(X)=(\Gamma(\hat{H},T\hat{H}),\star_t,\langle, \rangle_{X,T})$, which is Frobenius algebra over the ring $\cO_{\hat{H}}(\hat{H})=N_T(X)[[t^\mu]]$. Also recall that we have a natural 1-1 correspondence between $\{\Delta_i^{\frac{1}{2}}\frac{\partial}{\partial u^i}\}$ and $\{\tilde{\phi}_\mu=\frac{\partial}{\partial t^\mu}\}$ by identifying them at the origin $t=0,Q=0$. So we can use the same index $i$ for both of the two basis. Now we define another multiplication $\bullet$ on $\Gamma(\hat{H},T\hat{H})$ by
$$
\frac{\partial}{\partial t^i}\bullet \frac{\partial}{\partial t^j}=\delta_{i,j}\Delta_i^{\frac{1}{2}}\frac{\partial}{\partial t^i}.
$$
Let $A$ be the Frobenius algebra $(\Gamma(\hat{H},T\hat{H}),\bullet,\langle, \rangle_{X,T})$. Then $\{\frac{\partial}{\partial t^\mu}\}$ is a normalized canonical basis of $A$. The linear operator $\Psi$ gives an isomorphism
\begin{eqnarray*}
\Psi: A&\to& QH^*_{\CR,T}(X)\\
\frac{\partial}{\partial t^i}&\mapsto& \Delta_i^{\frac{1}{2}}\frac{\partial}{\partial u^i}
\end{eqnarray*}
between Frobenius algebras.

Before we move on to the quantization process, let us consider the potential functions of the trivial cohomological field theory $I_{A}$. Define the correlator $\langle \rangle_{g,k}^{I_{A}}$ to be
$$\langle \tau_{a_1}(\tilde{\phi}_{i_1}),\cdots,\tau_{a_k}(\tilde{\phi}_{i_k})\rangle_{g,k}^{I_{A}}=
\left\{\begin{array}{ll}\Delta_i^{g-1+k/2}\int_{\Mbar_{g,k}}\psi_1^{a_1}\cdots\psi_k^{a_k}, &\textrm{if}\quad i_1=i_2=\cdots=i_k=i,\\
0, &\textrm{otherwise}\end{array} \right.$$
where $a_1,\cdots,a_k$ are nonnegative integers. Let
$$\cD_{I_{A}}=\exp\big(\sum_{g\geq 0}\sum_{k\geq 0}\sum_{a_1,\cdots,a_k\geq 0}\sum_{i_1,\cdots,i_k\in\{1,\cdots,N\}}
\frac{\hbar^{g-1}t_{a_1}^{i_1}\cdots t_{a_k}^{i_k}}{a_1!\cdots a_k!}
\langle \tau_{a_1}(\tilde{\phi}_{i_1}),\cdots,\tau_{a_k}(\tilde{\phi}_{i_k})\rangle_{g,k}^{I_{A}}\big).$$

The following theorem is the result of the semisimplicity of the Frobenius manifold $\hat{H}$ and Teleman's classification of semisimple cohomological field theories (see \cite{Tel}):

\begin{theorem}[Givental formula for ancestor potentials of GKM orbifolds]\label{ancestor}
There exists a fundamental solution $\tS=\Psi \tR(z)e^{U/z}$ to the quantum differential equation $\nabla^{\hat{H}} \tau(z)=0$ in Theorem \ref{fundamental} such that
$$\cA_X(t)=\widehat{\Psi}\widehat{\tR}\cD_{I_A}.$$
Here $\widehat{\Psi}$ is the operator $\cG(\Psi^{-1}\bq)\mapsto \cG(\bq)$ for any element $\cG$ in the Fock space.
\end{theorem}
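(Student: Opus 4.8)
The plan is to deduce Theorem~\ref{ancestor} directly from Teleman's classification of semisimple cohomological field theories \cite{Tel}, once the relevant Gromov--Witten theory of $X$ is packaged as such a CohFT. First I would observe that the collection of ancestor correlators $\{\bar F_{g,k}^{X,T}\}$, together with the Frobenius algebra structure $(H^*_{\CR,T}(X;\cQ'),\star_X,\langle,\rangle_{X,T})$, defines a cohomological field theory taking values in $H^*(\Mbar_{g,k})$ with coefficients in $N_T(X)[[t^\mu]]$. This is standard: the classes $\Omega_{g,k}(\gamma_1,\dots,\gamma_k) := (\pi_{k})_*\big(\prod_i \ev_i^*\gamma_i / e_T(N^\vir)\big) \in H^*(\Mbar_{g,k})$, where $\pi_k:\Mbar_{g,k+s}(X,\beta)^T\to\Mbar_{g,k}$ forgets the map and the last points, satisfy the splitting and genus-reduction axioms by the usual pullback formulas for the $\psi$-classes under gluing, and the genus-zero $3$-point part reproduces the quantum product $\star_t$. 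The key input, already supplied in the excerpt, is that the Frobenius manifold $\hat H$ is semisimple (Section~\ref{Formal completion}), so this CohFT is a semisimple CohFT in Teleman's sense.

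Next I would invoke Teleman's reconstruction theorem: any semisimple CohFT lies in the orbit of the trivial (topological) CohFT with the same Frobenius structure under the action of the ``upper-triangular'' group (the $\widehat{R}$-action by quantized symplectomorphisms $R(z)=\one+R_1 z+\cdots$ satisfying the symplectic/unitarity condition) composed with the ``lower-triangular'' translation and the diagonal rescaling by $\Psi$. Concretely, the trivial CohFT on the Frobenius algebra $A=(\Gamma(\hat H,T\hat H),\bullet,\langle,\rangle_{X,T})$ has total descendent potential $\cD_{I_A}$ as defined just before the theorem, and Teleman's theorem gives $\cA_X(t)=\widehat\Psi\,\widehat{R}\,\cD_{I_A}$ for \emph{some} $R(z)$ of the required shape, where $\Psi$ is precisely the transition operator between the normalized canonical basis and the flat basis that was constructed in Section~\ref{Formal completion}. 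The diagonal translation part $e^{U/z}$ enters because the canonical coordinates $u^i$ shift the origin of the Fock space of the trivial theory fiberwise; this is the standard way the ancestor potential depends on the base point $t$.

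Then I would identify the $R$ produced by Teleman with the $R$-matrix $\tR(z)$ appearing in the fundamental solution $\tS=\Psi\tR(z)e^{U/z}$ of Theorem~\ref{fundamental}. The point is that both are characterized, up to the same right-multiplication ambiguity by $\exp(a_1 z+a_3 z^3+\cdots)$ with constant diagonal $a_{2k-1}$, as the unique formal series conjugating the Dubrovin connection in the flat frame to its normal form $d-z^{-1}dU$ in the canonical frame while satisfying the unitarity condition $\tR^*(-z)\tR(z)=\one$. Teleman's $R$ satisfies exactly these: the flatness/unitarity constraints are forced by the fact that the $\widehat R$-action must send the string/dilaton-compatible trivial theory to a CohFT whose genus-zero data match the given Frobenius manifold. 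Hence one may \emph{choose} $\tS$ in Theorem~\ref{fundamental} so that its $\tR(z)$ equals Teleman's $R$, which is the content of the ``there exists a fundamental solution $\tS=\Psi\tR(z)e^{U/z}$ such that'' phrasing; fixing the residual constant ambiguity is deferred to Section~6 and is not needed here.

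The main obstacle is the bookkeeping needed to check that the GKM-orbifold ancestor theory genuinely satisfies Teleman's axioms over the coefficient ring in play: one must verify that pushing forward along the fixed-locus decomposition $\Mbar_{g,k}(X,\beta)^T$ and dividing by $e_T(N^\vir)$ is compatible with the gluing maps on $\Mbar_{g,k}$ (so that the splitting axiom holds after equivariant localization), that the resulting classes are polynomial/well-defined in the completed ring $N_T(X)[[t^\mu]]$ (using the divisor equation of Section~\ref{divisor} to tame the $Q$-dependence), and that the semisimplicity established for $\hat H$ is exactly the hypothesis Teleman requires (generic semisimplicity suffices, and the canonical basis of Section~\ref{Formal completion} provides it). Once these are in place, the equality $\cA_X(t)=\widehat\Psi\,\widehat{\tR}\,\cD_{I_A}$ is a direct translation of Teleman's statement into the notation of this paper.
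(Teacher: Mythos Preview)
Your proposal is correct and takes essentially the same approach as the paper: the paper simply states that Theorem~\ref{ancestor} is ``the result of the semisimplicity of the Frobenius manifold $\hat H$ and Teleman's classification of semisimple cohomological field theories \cite{Tel},'' and the subsequent remarks (Remark~\ref{group} in particular) make precisely your points about the group-action interpretation, the dilaton shift, and the identification of Teleman's $R$ with an $\tR$ as in Theorem~\ref{fundamental}. The only detail the paper adds that you leave implicit is that Teleman's theorem is first applied on $\hat\cH$ (i.e., at $Q=1$, giving $\cA_X(t)|_{Q=1}=\widehat\Psi|_{Q=1}\widehat{\tR}|_{Q=1}\cD_{I_A}$) and then lifted to $\hat H$ by rescaling each factor $\prod_{\mu'}e^{t^{\mu'}\int_\beta\gamma_{\mu'}}$ by $Q^\beta$ via the divisor equation.
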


\begin{remark}
In \cite{Tel}, Teleman in fact considers the fundamental solution to the quantum differential equation $\nabla^{\hat{\cH}} \tau(z)=0$ for the Frobenius manifold $\hat{\cH}$. The corresponding $R-$matrix is $\tR|_{Q=1}$. Then the corresponding quantization formula is
\begin{equation}\label{quantization}
\cA_X(t)|_{Q=1}=\widehat{\Psi}|_{Q=1}\widehat{\tR}|_{Q=1}\cD_{I_A}.
\end{equation}
Theorem \ref{ancestor} is obtained by rescaling each factor $\prod_{\mu'}e^{t^{\mu'}\int_{\beta}\gamma_{\mu'}}$ in (\ref{quantization}) by $Q^\beta$.
\end{remark}

\begin{remark}\label{group}
In \cite{Tel}, $\widehat{\tR}$ is explained as an element in a certain group acting on the cohomological field theories and Theorem \ref{ancestor} is equivalent to say that $\cA_X(t)$ and $\cD_{I_{A}}$ lie in the same orbit under this group action. The dilaton shift in the quantization process is equivalent to the conjugate action of the translation $T_z$ in \cite{Tel}. The underlining Frobenius algebras of the two cohomological field theories $\cA_X(t)$ and $\cD_{I_{A}}$ are $QH^*_{\CR,T}(X)$ and $A$ respectively. And $\Psi$ gives an isomorphism between $A$ and $QH^*_{\CR,T}(X)$.

\end{remark}

Recall that we have defined a particular fundamental solution $S_t$ in Section \ref{sol} by the 1-primary, 1-descendent correlator. The quantization of the operator $S_t$ relates the ancestor potential $\cA_X(t)$ to the descendent potential $\cD_X$. Specifically, we have the following theorem:

\begin{theorem}[Givental formula for descendent potentials of GKM orbifolds]\label{ancestor-descendent}
Let $F_1(t)=\sum_{k\geq 0}\frac{1}{k!}F_{1,k}^{X,T}(\bt,0)|_{t_0=1,t_1=t_2=\cdots=0}$. Then we have
$$\cD_X=\exp(F_1(t))\widehat{S_t}^{-1}\cA_X(t)=\exp(F_1(t))\widehat{S_t}^{-1}\widehat{\Psi}\widehat{\tR}\cD_{I_A}.$$

\end{theorem}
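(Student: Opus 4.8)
The plan is to verify that the operator $\widehat{S_t}$ transforms the ancestor potential into the descendent potential, up to the genus-one correction $\exp(F_1(t))$. The starting point is the relationship between ancestor and descendent invariants at the level of generating functions, which is classical in the smooth case (Givental \cite{Giv3}, Coates \cite{Coa}) and carries over verbatim to the orbifold setting since all the necessary inputs—the string and dilaton equations, the comparison of $\psi$-classes and $\bar\psi$-classes under the forgetful morphism $\pi:\Mbar_{g,k+s}(X,\beta)\to\Mbar_{g,k}$, and the fact that $S_t$ is a fundamental solution to the quantum differential equation (established in Section \ref{sol} via \cite{Iri})—are available in the orbifold category. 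Concretely, the comparison of $\psi$ and $\bar\psi$ via the excess intersection formula on $\Mbar_{g,k+s}$ expresses each descendent correlator as a combination of ancestor correlators with $S_t$-modified insertions; packaging this over all $g,k,s,\beta$ gives a relation of the shape $\cD_X = \exp(F_1(t))\,\widehat{S_t}^{-1}\cA_X(t)$, where the $\exp(F_1(t))$ factor arises precisely from the genus-one, zero-descendent contributions that are not captured by the stable forgetful maps (this is why $\cA_X(t)$ in Section \ref{GW-GKM} is defined only over $2g-2+k>0$). I would first set up this identity carefully, keeping track of the dilaton shift $\bq(z)=\bt(z)-\one z$ so that the Darboux coordinates match on both sides.

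Second, I would check that the quantization $\widehat{S_t}$ is well defined as an operator on the Fock space. Since $S_t = \one + S_1/z + S_2/z^2+\cdots$ is a formal power series in $1/z$ whose coefficients lie in the appropriate completed ring (with $t$- and $Q$-adic filtration), its action on $\cA_X(t)$ involves only finitely many terms in each fixed $(Q,t)$-degree and each fixed genus, so the expression $\widehat{S_t}^{-1}\cA_X(t)$ makes sense; this is the orbifold analogue of the well-definedness discussion in Section 4.2. The infinitesimal symplectic property of $\log S_t$ (equivalently, the fact that $S_t$ preserves $\Omega$, which follows from the unitarity $S_t^*(-z)S_t(z)=\one$—itself a consequence of $S_t$ being a fundamental solution and of the symmetry of the genus-zero two-point function) guarantees that $\widehat{S_t}$ is a genuine quantization operator in the sense of Section 4.2.

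Third, I would combine this with Theorem \ref{ancestor}, $\cA_X(t) = \widehat{\Psi}\widehat{\tR}\cD_{I_A}$, to obtain the second equality $\cD_X = \exp(F_1(t))\,\widehat{S_t}^{-1}\widehat{\Psi}\widehat{\tR}\cD_{I_A}$; this step is purely formal once the first equality is in place. The main obstacle is the first step: making the ancestor/descendent comparison rigorous in the orbifold setting, in particular handling the twisted sectors correctly in the excess-intersection computation on $\Mbar_{g,k+s}(X,\beta)$ and verifying that the genus-one discrepancy is exactly $\exp(F_1(t))$ with $F_1(t)=\sum_{k\geq 0}\tfrac{1}{k!}F_{1,k}^{X,T}(\bt,0)|_{t_0=1,t_1=t_2=\cdots=0}$. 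I expect this to follow the template of \cite{Coa} and \cite{Giv3} with only notational changes, since the relevant properties of $\Mbar_{g,k}(X,\beta)$—flat pullback of $\psi$-classes along $\pi$, compatibility of virtual classes, and the divisor equation for ancestors (recorded in Section \ref{divisor} following \cite{Man})—hold for smooth Deligne–Mumford stacks; nonetheless the bookkeeping over effective classes $\beta$ and the interplay with the Novikov variable $Q$ (which, by the divisor equation, can be absorbed into $\prod_{\mu'}e^{t^{\mu'}\int_\beta\gamma_{\mu'}}$ on the ancestor side but must be retained on the descendent side) requires care.
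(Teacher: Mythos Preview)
Your proposal is correct and follows essentially the same approach as the paper: the paper simply cites Theorem~1.5.1 of \cite{Coa} for the relation $\cD_X=\exp(F_1(t))\widehat{S_t}^{-1}\cA_X(t)$, noting that the strategy is the comparison-lemma argument relating $\psi$ to $\bar\psi$ and that the orbifold case is ``completely similar to the smooth case,'' and then invokes Theorem~\ref{ancestor} for the second equality. Your outline is in fact more detailed than what the paper provides, but the underlying route---$\psi/\bar\psi$ comparison via the forgetful morphism, $S_t$ as fundamental solution, the genus-one discrepancy giving $\exp(F_1(t))$, and substitution of Theorem~\ref{ancestor}---is the same.
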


The proof of the relation $\cD_X=\exp(F_1(t))\widehat{S_t}^{-1}\cA_t$ can be found in Theorem 1.5.1 of \cite{Coa} for the smooth case. The strategy is to use the comparison lemma type argument to relate $\psi$ to $\bar\psi$. The proof for the orbifold case is completely similar to the smooth case.

We should notice that although $\cA_X(t)$ depends on $t$, the total descendent potential $\cD_X$ is independent of $t$. For our purpose, we are more interested in the descendent potential with primary insertions i.e the function $F_{g,k}^{X,T}(\bt,t)$ defined in Section \ref{GW-GKM}. This potential function will eventually correspond to the B-model potential under the all genus mirror symmetry studied in \cite{Fang-Liu-Zong3}. The relation between $F_{g,k}^{X,T}(\bt,t)$ and the ancestor potential $\bar{F}_{g,k}^{X,T}(\bt,t)$ is even easier:

\begin{theorem}\label{descendent-ancestor}
For $2g-2+k> 0$, we have the following relation
$$F_{g,k}^{X,T}(\bt,t)=\bar{F}_{g,k}^{X,T}([S_t\bt]_+,t).$$
Here we consider $\bt=\bt(z)$ as element in $\bH$ and $[S_t\bt]_+$ is the part of $S_t\bt$ containing nonnegative powers of $z$.

\end{theorem}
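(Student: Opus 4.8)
The plan is to derive the primary-insertion relation $F_{g,k}^{X,T}(\bt,t)=\bar{F}_{g,k}^{X,T}([S_t\bt]_+,t)$ directly from Theorem \ref{ancestor-descendent}, i.e.\ from the identity $\cD_X=\exp(F_1(t))\widehat{S_t}^{-1}\cA_X(t)$, by unpacking what the operator $\widehat{S_t}^{-1}$ does at the level of the generating functions. First I would recall that $\cD_X=\exp\big(\sum_{g,k}\frac{\hbar^{g-1}}{k!}F_{g,k}^{X,T}(\bt,0)\big)$ and $\cA_X(t)=\exp\big(\sum_{2g-2+k>0}\frac{\hbar^{g-1}}{k!}\bar F_{g,k}^{X,T}(\bt,t)\big)$, and that the variable $\bt=\bt(z)$ is related to the Darboux coordinate $\bq(z)$ by the dilaton shift $\bq(z)=\bt(z)-\one z$. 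Since $S_t$ is a fundamental solution to the quantum differential equation and $S_t=\one+S_1/z+S_2/z^2+\cdots$ is a power series in $1/z$, its symplectic-transformation action on $\bH$ sends $\bH_+$ into a graph over $\bH_+$; the quantization $\widehat{S_t}$ is lower-triangular in the appropriate sense, so that $\widehat{S_t}^{-1}$ acting on an element of the Fock space is simply a change of the argument $\bt$ together with a quadratic-in-$\bt$ correction coming from the $\Omega(S_t f, f)$-type term. Concretely, I would use the standard formula (as in \cite{Giv3}, \cite{Coa}) that for $\cG$ in the Fock space, $(\widehat{S_t}^{-1}\cG)(\bq) = e^{W(\bq)/2\hbar}\,\cG\big([S_t(\bq+\one z)]_+ - \one z\big)$ for a suitable quadratic form $W$, and match the dilaton-shifted variables so that $[S_t\bt]_+$ is exactly the new insertion.

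The key steps, in order, are: (1) write out both sides of Theorem \ref{ancestor-descendent} as exponentials of genus-sorted generating functions and take $\log$; (2) expand the left side in $\hbar$ and read off, for each $(g,k)$ with $2g-2+k>0$, the coefficient of $\hbar^{g-1}/k!$, which is $F_{g,k}^{X,T}(\bt,0)$ — but since the Novikov variable and the divisor equation let us reinstate the base point $t$, this is equivalent to handling $F_{g,k}^{X,T}(\bt,t)$ with the background shift; (3) on the right side, expand $\exp(F_1(t))\widehat{S_t}^{-1}$ applied to $\cA_X(t)$: the factor $\exp(F_1(t))$ absorbs the genus-one primary potential and the lower-order $(2g-2+k\le 0)$ terms, while the quadratic form $W$ from $\widehat{S_t}^{-1}$ contributes only to the unstable and genus-one pieces and hence drops out of the stable range $2g-2+k>0$; (4) conclude that in the stable range the only effect of $\widehat{S_t}^{-1}$ is the substitution $\bt\mapsto [S_t\bt]_+$ in $\bar F_{g,k}^{X,T}(\bt,t)$, which is the claimed identity. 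Throughout I would invoke the string and dilaton equations to control the unstable terms and to see that the dilaton-shift bookkeeping is consistent.

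I expect the main obstacle to be step (3): carefully verifying that the quadratic ``lower-triangular'' correction $W(\bq)$ produced by $\widehat{S_t}^{-1}$ contributes nothing to the $2g-2+k>0$ part, so that it can be absorbed entirely into $\exp(F_1(t))$ and the unstable-term bookkeeping, and that the genus-one primary potential $F_1(t)$ matches exactly the genus-one discrepancy between $\log\cA_X(t)$ and $\log\cD_X$. This is the same comparison-lemma argument relating $\psi$-classes to $\bar\psi$-classes that appears in Theorem 1.5.1 of \cite{Coa} for the smooth case; since the orbifold moduli spaces $\Mbar_{g,k}(X,\beta)$ carry the same forgetful map $\pi:\Mbar_{g,k+s}(X,\beta)\to\Mbar_{g,k}$ with $\bar\psi_i=\pi^*\psi_i$, and the divisor equation for ancestors (quoted in Section \ref{divisor}) has no descendent correction, the proof is a direct transcription of the smooth case with $H^*(X)$ replaced by $H^*_{\CR,T}(X;\cQ')$ and integrals taken against the virtual class. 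I would therefore present the argument by reducing to \cite{Coa} and indicating only the points where orbifold features (twisted sectors, the rescaling of Novikov variables, the factor $|C(h)|$ in the pairing) enter, all of which are formal and do not affect the structure of the identity.
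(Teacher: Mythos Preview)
Your proposal takes the reverse logical direction from the paper (and from Coates). The paper does not give an independent argument here; it simply points to Theorem~1.5.1 of \cite{Coa}, and in Coates the identity corresponding to Theorem~\ref{descendent-ancestor} is the \emph{primitive} statement, proved directly from the geometric comparison $\psi_i-\bar\psi_i=D_i$ (the boundary divisor of curves with a rational tail carrying the $i$-th marking) together with the factorization of Gromov--Witten classes along $D_i$. The quantization identity $\cD_X=\exp(F_1(t))\,\widehat{S_t}^{-1}\cA_X(t)$ of Theorem~\ref{ancestor-descendent} is then obtained by summing that relation over all $(g,k)$ and repackaging it in the Fock-space language. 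If you start from Theorem~\ref{ancestor-descendent} and try to extract Theorem~\ref{descendent-ancestor}, you are running the argument backwards and, since the only proof of Theorem~\ref{ancestor-descendent} on offer is Coates' proof, the result is circular.

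Your outline can be made to work as a formal equivalence (the two theorems are indeed equivalent once both are known), but several of the steps you mark as routine are not: the substitution produced by $\widehat{S_t}^{-1}$ is in the $\bq$-variable, and after undoing the dilaton shift one finds $\bt'=[S_t\bt]_+-S_1\one$, not $[S_t\bt]_+$; one must use the string equation to identify $S_1\one=t$ and then a further shift $\bt\mapsto\bt+t$ to land on the desired form. You also need a homogeneity-in-$\bt$ argument to pass from the summed-over-$k$ identity to the identity for each fixed $k$. Your claim that $W$ contributes to ``unstable and genus-one pieces'' is off: $W/\hbar$ is purely genus zero. None of this is fatal, but it shows that the detour through $\widehat{S_t}^{-1}$ does not simplify anything.

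The clean route, and the one the paper intends, is the one you mention only in your last paragraph: prove directly that replacing each $\psi_i$ by $\bar\psi_i$ costs exactly the insertion of $S_t$ at the $i$-th marking, by iterating $\psi_i=\bar\psi_i+D_i$ and using the splitting axiom; the orbifold case goes through verbatim because the forgetful map $\pi:\Mbar_{g,k+s}(X,\beta)\to\Mbar_{g,k}$ and the identity $\bar\psi_i=\pi^*\psi_i$ hold for Deligne--Mumford targets. Present that argument and drop steps~(1)--(4).
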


The proof of the theorem can also be found in the proof of Theorem 1.5.1 of \cite{Coa}.

\subsection{The graph sum formula}\label{graph}
In this section, we describe a graph sum formula of $\bar{F}_{g,k}^{X,T}(\bt,t)$ which is equivalent to Theorem \ref{ancestor}. By using Theorem \ref{descendent-ancestor}, we obtain the graph sum formula for $F_{g,k}^{X,T}(\bt,t)$ for $2g-2+k>0$.
The graph sum formula gives us a more explicit expression of the Givental formula. In \cite{Fang-Liu-Zong3}, we prove the mirror symmetry by expressing both the A-model and B-model potentials as graph sums and by identifying each term in the graph sum. The reader is referred to \cite{D-Sha-spi} for a proof of the equivalence of the graph sum formula and the quantization formula.

In this subsection, every matrix expression of the corresponding linear operator is under the basis $\{\tilde{\phi}_\mu\}$.

Given a connected graph $\Ga$, we introduce the following notation.
\begin{enumerate}
\item $V(\Ga)$ is the set of vertices in $\Ga$.
\item $E(\Ga)$ is the set of edges in $\Ga$.
\item $H(\Ga)$ is the set of half edges in $\Gamma$.
\item $L^o(\Ga)$ is the set of ordinary leaves in $\Ga$.
\item $L^1(\Ga)$ is the set of dilaton leaves in $\Ga$.
\end{enumerate}

With the above notation, we introduce the following labels:
\begin{enumerate}
\item (genus) $g: V(\Ga)\to \bZ_{\geq 0}$.
\item (marking) $i: V(\Ga) \to \{1,\cdots,N\}$. This induces
$i:L(\Ga)=L^o(\Ga)\cup L^1(\Ga)\to \{1,\cdots,N\}$, as follows:
if $l\in L(\Ga)$ is a leaf attached to a vertex $v\in V(\Ga)$,
define $i(l)=i(v)$.
\item (height) $a: H(\Ga)\to \bZ_{\geq 0}$.
\end{enumerate}

Given an edge $e$, let $h_1(e),h_2(e)$ be the two half edges associated to $e$. The order of the two half edges does not affect the graph sum formula in this paper. Given a vertex $v\in V(\Ga)$, let $H(v)$ denote the set of half edges
emanating from $v$. The valency of the vertex $v$ is equal to the cardinality of the set $H(v)$: $\val(v)=|H(v)|$.
A labeled graph $\vGa=(\Ga,g,i,a)$ is {\em stable} if
$$
2g(v)-2 + \val(v) >0
$$
for all $v\in V(\Ga)$.

Let $\bGa(X)$ denote the set of all stable labeled graphs
$\vGa=(\Gamma,g,i,a)$. The genus of a stable labeled graph
$\vGa$ is defined to be
$$
g(\vGa):= \sum_{v\in V(\Ga)}g(v)  + |E(\Ga)|- |V(\Ga)|  +1
=\sum_{v\in V(\Ga)} (g(v)-1) + (\sum_{e\in E(\Gamma)} 1) +1.
$$
Define
$$
\bGa_{g,k}(X)=\{ \vGa=(\Gamma,g,i,a)\in \bGa(X): g(\vGa)=g, |L^o(\Ga)|=k\}.
$$
Let $\bt(z)=\sum_\mu\bt^\mu(z)\tilde{\phi}_\mu$.

We assign weights to leaves, edges, and vertices of a labeled graph $\vGa\in \bGa(X)$ as follows.

\begin{enumerate}
\item {\em Ordinary leaves.} To each ordinary leaf $l \in L^o(\Ga)$ with  $i(l)= i\in \{1,\cdots,N\}$
and  $a(l)= a\in \bZ_{\geq 0}$, we assign:
$$
(\cL^{\bt})^i_a(l) = [z^a] (\sum_{\mu,j=1,\cdots,N}\bt^\mu(z)\Psi_{\mu}^{\,\ j}
\tR_j^{\,\ i}(-z) ).
$$
\item {\em Dilaton leaves.} To each dilaton leaf $l \in L^1(\Ga)$ with $i(l)=i\in \{1,\cdots,N\}$
and $2\leq a(l)=a \in \bZ_{\geq 0}$, we assign
$$
(\cL^1)^i_a(l) = [z^{a-1}](-\sum_{j=1,\cdots,N}\frac{1}{\sqrt{\Delta^j}} \tR_j^{\,\ i}(-z)).
$$

\item {\em Edges.} To an edge connected a vertex marked by $i\in \{1,\cdots,N\}$ to a vertex
marked by $j\in \{1,\cdots,N\}$ and with heights $a$ and $b$ at the corresponding half-edges, we assign
$$
\cE^{i,j}_{a,b}(e) = [z^a w^b]
\Bigl(\frac{1}{z+w} (\delta_{i,j}-\sum_{p=1,\cdots,N}
\tR_p^{\,\ i}(-z) \tR_p^{\,\ j}(-w)\Bigr).
$$
\item {\em Vertices.} To a vertex $v$ with genus $g(v)=g\in \bZ_{\geq 0}$ and with
marking $i(v)=i$, with $k_1$ ordinary
leaves and half-edges attached to it with heights $a_1, ..., a_n \in \bZ_{\geq 0}$ and $k_2$ more
dilaton leaves with heights $a_{k_1+1}, \ldots, a_{k_1+k_2}\in \bZ_{\geq 0}$, we assign
$$
 \int_{\Mbar_{g,k_1+k_2}}\psi_1^{a_1} \cdots \psi_{k_1+k_2}^{a_{n+m}}.
$$
\end{enumerate}

We define the weight of a labeled graph $\vGa\in \bGa(X)$ to be
\begin{eqnarray*}
w(\vGa) &=& \prod_{v\in V(\Ga)} (\sqrt{\Delta^{i(v)}})^{2g(v)-2+\val(v)} \langle \prod_{h\in H(v)} \tau_{a(h)}\rangle_{g(v)}
\prod_{e\in E(\Ga)} \cE^{i(v_1(e)),i(v_2(e))}_{a(h_1(e)),a(h_2(e))}(e)\\
&& \cdot \prod_{l\in L^o(\Ga)}(\cL^{\bt})^{i(l)}_{a(l)}(l)
\prod_{l\in L^1(\Ga)}(\cL^1)^{i(l)}_{a(l)}(l).
\end{eqnarray*}
Then
$$
\log(\cA_X(t)) =
\sum_{\vGa\in \bGa(X)}  \frac{ \hbar^{g(\vGa)-1} w(\vGa)}{|\Aut(\vGa)|}
= \sum_{g\geq 0}\hbar^{g-1} \sum_{k\geq 0}\sum_{\vGa\in \bGa_{g,k}(X)}\frac{w(\vGa)}{|\Aut(\vGa)|}.
$$

By Proposition \ref{descendent-ancestor}, $F_{g,k}^{X,T}(\bt,t)$ can be obtained by $\bar{F}_{g,k}^{X,T}(\bt,t)$ by change of variables defined by the operator $S_t(z)$. So in order to get the graph sum formula for $F_{g,k}^{X,T}(\bt,t)$, we only need to modify the ordinary leaves:

\begin{enumerate}
\item[(1)'] {\em Ordinary leaves.}
To each ordinary leaf $l \in L^o(\Ga)$ with  $i(l)= i\in \{1,\cdots,N\}$
and  $a(l)= a\in \bZ_{\geq 0}$, we assign:
$$
(\mathring{\cL}^{\bt})^i_a(l) = [z^a] (\sum_{\mu,\nu,j=1,\cdots,N}(\bt^\mu(z)S_t(z)^\nu_{\,\ \mu})_+\Psi_{\nu}^{\,\ j}
\tR_j^{\,\ i}(-z) ).
$$

\end{enumerate}
We define a new weight of a labeled graph $\vGa\in \bGa(X)$ to be
\begin{eqnarray*}
\mathring{w}(\vGa) &=& \prod_{v\in V(\Ga)} (\sqrt{\Delta^{i(v)}})^{2g(v)-2+\val(v)} \langle \prod_{h\in H(v)} \tau_{a(h)}\rangle_{g(v)}
\prod_{e\in E(\Ga)} \cE^{i(v_1(e)),i(v_2(e))}_{a(h_1(e)),a(h_2(e))}(e)\\
&& \cdot \prod_{l\in L^o(\Ga)}(\mathring{\cL}^{\bt})^{i(l)}_{a(l)}(l)
\prod_{l\in L^1(\Ga)}(\cL^1)^{i(l)}_{a(l)}(l).
\end{eqnarray*}
Then by Theorem \ref{ancestor} and Proposition \ref{descendent-ancestor}, we have the following graph sum formula for $F^{X,T}_{g,k}(\bt,t)$,

\begin{theorem}\label{graph sum}
For $2g-2+k>0$,
$$
\frac{1}{k!}F^{X,T}_{g,k}(\bt,t) =
\sum_{\vGa\in \bGa_{g,k}(X)}\frac{\mathring{w}(\vGa)}{|\Aut(\vGa)|}.
$$
\end{theorem}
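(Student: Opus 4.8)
The plan is to derive Theorem \ref{graph sum} from Theorem \ref{ancestor} and Theorem \ref{descendent-ancestor} by two successive steps: first unpacking the quantization formula $\cA_X(t) = \widehat{\Psi}\widehat{\tR}\cD_{I_A}$ into a sum over decorated graphs, and then implementing the change of variables $\bt \mapsto [S_t\bt]_+$ dictated by Theorem \ref{descendent-ancestor}. The first step is the standard Feynman-diagram expansion of a product of quantization operators applied to a Witten--Kontsevich-type generating function, and the second step only modifies the ordinary leaves, so the combinatorial skeleton of the graphs is unchanged.

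For the first step I would proceed as follows. The potential $\log\cD_{I_A}$ is a sum over a vertex $v$ with genus $g(v)$ of $(\Delta^{i(v)})^{g(v)-1+\val(v)/2}\int_{\Mbar_{g(v),\val(v)}}\psi_1^{a_1}\cdots\psi_{\val(v)}^{a_{\val(v)}}$ times the corresponding $t$-variables, because the trivial theory $I_A$ is a direct sum of $N$ rescaled copies of the point theory and hence factorizes over the marking $i$. Applying $\widehat{\tR}$, one writes $\tR(z) = \one + \tR_1 z + \cdots$ and uses the standard rule that the quantization of an upper-triangular symplectic transformation $\widehat{\tR}$ acts on the Fock space by: (i) summing over all ways of gluing pairs of $\partial/\partial q$'s via the propagator $\cE^{i,j}_{a,b}(e)=[z^a w^b]\bigl(\tfrac{1}{z+w}(\delta_{i,j}-\sum_p \tR_p^{\,\ i}(-z)\tR_p^{\,\ j}(-w))\bigr)$, which is exactly the standard kernel $(\one - R(-z)R(-w)^{T})/(z+w)$ read off from $\tR^*(-z)\tR(z)=\one$; (ii) dressing the remaining $q$-legs by $\tR(-z)$, which after applying $\widehat{\Psi}$ (the substitution $\bq \mapsto \Psi^{-1}\bq$, i.e.\ $\tilde\phi$-coordinates $\mapsto$ canonical coordinates) produces the ordinary-leaf weight $(\cL^{\bt})^i_a(l)=[z^a](\sum \bt^\mu(z)\Psi_\mu^{\,\ j}\tR_j^{\,\ i}(-z))$; and (iii) producing dilaton leaves from the dilaton shift $\bq(z)=\bt(z)-\one z$, which contributes the extra $-z\one$ term through each $\tR$-dressing and gives $(\cL^1)^i_a(l)=[z^{a-1}](-\sum_j \tfrac{1}{\sqrt{\Delta^j}}\tR_j^{\,\ i}(-z))$. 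Collecting the $\hbar$ bookkeeping — each $\widehat{p p}$-contraction carries an $\hbar$, each vertex of genus $g$ carries $\hbar^{g-1}$ — gives $\hbar^{g(\vGa)-1}$ with $g(\vGa)$ as defined, and the symmetry factor $1/|\Aut(\vGa)|$ arises from the $1/n!$ in the exponential and from relabeling of identical contractions. This yields $\log\cA_X(t)=\sum_{\vGa}\hbar^{g(\vGa)-1}w(\vGa)/|\Aut(\vGa)|$; I would cite \cite{D-Sha-spi} for the detailed verification that the quantization formula equals this graph sum, as the excerpt already points to that reference.

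For the second step, by Theorem \ref{descendent-ancestor} we have $F^{X,T}_{g,k}(\bt,t)=\bar F^{X,T}_{g,k}([S_t\bt]_+,t)$, and $\tfrac{1}{k!}\bar F^{X,T}_{g,k}(\bt,t)$ is the genus-$g$, $k$-ordinary-leaf part of $\log\cA_X(t)$ computed above. Substituting $\bt \mapsto [S_t\bt]_+$ only affects the ordinary leaves, since $S_t$ acts on the $\bt$-insertion variables and not on the graph structure, the edges, the dilaton leaves, or the vertex contributions. Tracking this substitution through the ordinary-leaf weight: replacing $\bt^\mu(z)$ by $(\sum_\nu \bt^\mu(z) S_t(z)^\nu_{\,\ \mu})_+$ before the $\Psi \tR(-z)$ dressing — here the subscript $+$ records that only nonnegative powers of $z$ survive in $[S_t\bt]_+$, as stated in Theorem \ref{descendent-ancestor} — turns $(\cL^{\bt})^i_a(l)$ into exactly $(\mathring{\cL}^{\bt})^i_a(l)=[z^a](\sum_{\mu,\nu,j}(\bt^\mu(z)S_t(z)^\nu_{\,\ \mu})_+\Psi_\nu^{\,\ j}\tR_j^{\,\ i}(-z))$. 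Thus $\mathring w(\vGa)$ is obtained from $w(\vGa)$ by precisely this replacement, and summing over $\vGa\in\bGa_{g,k}(X)$ gives the claimed formula.

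The main obstacle I anticipate is not conceptual but bookkeeping: making the identification of the quantization operators with the Feynman rules fully precise in our non-conformal, equivariant, orbifold setting — in particular, verifying that the $\hbar$-grading, the sign conventions in $\tR^*(-z)\tR(z)=\one$, and the role of the dilaton shift all match the stated weights, and that the operator products $\widehat{\Psi}\widehat{\tR}\cD_{I_A}$ are well-defined as formal series (the finiteness remark after Theorem \ref{fundamental} is exactly what guarantees this). Since the combinatorial machinery is identical to the smooth case once the semisimple Frobenius structure and the $R$-matrix are in hand — and Theorem \ref{fundamental} supplies the $R$-matrix with the required unitarity — I would keep the argument brief, citing \cite{D-Sha-spi} and \cite{Coa} for the smooth prototypes and emphasizing only where the orbifold contributions (the canonical coordinates indexed by $\Sigma_X$, the factors $\sqrt{\Delta^i}$) enter, which is transparently through the trivial theory $I_A$ and the matrix $\Psi$.
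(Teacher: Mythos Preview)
Your proposal is correct and follows exactly the paper's own approach: the paper derives the graph sum for $\log\cA_X(t)$ as the equivalent reformulation of Theorem \ref{ancestor} (citing \cite{D-Sha-spi} for the quantization-equals-graph-sum step), and then obtains Theorem \ref{graph sum} by invoking Theorem \ref{descendent-ancestor} and noting that the substitution $\bt\mapsto[S_t\bt]_+$ modifies only the ordinary leaves. If anything, your write-up is more detailed than the paper's, which treats the Feynman-diagram expansion as a black box handed off to the reference.
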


\subsection{A formula for $F^{X,T}_{1,0}(t)$}
One should notice that the graph sum formula in the last section is for the case when $2g-2+k>0$. The missing higher genus potential is the case when $g=1,k=0$. In this section, we derive a formula for the genus one GW potential $F^{X,T}_{1,0}(t)$. Concretely speaking, we will prove the following Theorem \ref{F1}. When $X$ is smooth, this theorem appears in \cite{G97} \cite{Giv3}.

\begin{theorem}\label{F1}
The following formula holds for the genus one Gromov-Witten potential $F^{X,T}_{1,0}(t)$:
$$
dF^{X,T}_{1,0}(t)=\sum_{i=1}^N\frac{1}{48}d\log\Delta^i+\sum_{i=1}^N\frac{1}{2}(\tR_1)_i^{\,\ i}du^i.
$$

\end{theorem}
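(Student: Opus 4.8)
Since $F^{X,T}_{1,0}$ carries no descendant insertions, differentiating the defining series in the direction $\tilde\phi_\mu$ and using multilinearity of the correlators gives $\partial F^{X,T}_{1,0}(t)/\partial t^\mu=F^{X,T}_{1,1}(\tilde\phi_\mu,t)$, where on the right $\tilde\phi_\mu$ is inserted as a primary class (i.e. $\bt(z)=\tilde\phi_\mu$). So it suffices to compute $F^{X,T}_{1,1}(\tilde\phi_\mu,t)$ for each $\mu$. As $2\cdot1-2+1>0$, this is given by the graph sum of Theorem \ref{graph sum}; and because $S_t=\one+O(1/z)$ has no positive powers of $z$, the projection $(\bt^\kappa(z)S_t(z)^\nu_{\,\ \kappa})_+$ equals $\delta^\nu_\mu$ for $\bt=\tilde\phi_\mu$, so the ordinary-leaf weight at a vertex marked $i$ collapses to $[z^a]\bigl(\sum_j\Psi_\mu^{\,\ j}\tR_j^{\,\ i}(-z)\bigr)$, which is $\Psi_\mu^{\,\ i}$ when $a=0$.

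\noindent\textbf{The contributing graphs.} The plan is then to enumerate the stable graphs $\vGa\in\bGa_{1,1}(X)$. Imposing the dimension identity $\sum_{h\in H(v)}a(h)=\dim\Mbar_{g(v),\val(v)}$ at each vertex, together with stability, the condition $g(\vGa)=1$, and the requirement that dilaton leaves have height $\ge2$, forces only two families to contribute. (1) A single genus-$0$ vertex carrying one self-loop and the ordinary leaf; then $\val=3$, all heights are $0$, and $|\Aut(\vGa)|=2$; using $\int_{\Mbar_{0,3}}1=1$, the vertex factor $\sqrt{\Delta^i}$, and $\cE^{i,i}_{0,0}=(\tR_1)_i^{\,\ i}$ — which follows by expanding $\frac{1}{z+w}\bigl(\delta_{i,i}-\sum_p\tR_p^{\,\ i}(-z)\tR_p^{\,\ i}(-w)\bigr)$, the numerator being divisible by $z+w$ by the unitarity $\tR^*(-z)\tR(z)=\one$ — this family contributes $\frac12\sum_i\sqrt{\Delta^i}(\tR_1)_i^{\,\ i}\Psi_\mu^{\,\ i}$ to $F^{X,T}_{1,1}(\tilde\phi_\mu,t)$. (2) A single genus-$1$ vertex carrying the ordinary leaf and, for dimension reasons, at most one dilaton leaf; the two admissible height patterns use $\int_{\Mbar_{1,1}}\psi_1=\frac1{24}$ and $\langle\tau_0,\tau_2\rangle_{1,2}=\langle\tau_1\rangle_{1,1}=\frac1{24}$ (the string equation), and their sum is a combination of the \emph{off-diagonal} entries of $\tR_1$ weighted by $\Psi_\mu^{\,\ i}$ and by ratios of the $\sqrt{\Delta^i}$.

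\noindent\textbf{Assembling the formula.} Finally I would pass to canonical coordinates via $\sum_\mu\Psi_\mu^{\,\ i}\,dt^\mu=(\Delta^i)^{-1/2}du^i$ (Section \ref{Frobenius}). Family (1) becomes $\sum_i\frac12(\tR_1)_i^{\,\ i}du^i$ at once. For family (2) one rewrites the off-diagonal $\tR_1$ entries using the structure equations of the semisimple Frobenius manifold $\hat H$: the order-$z^0$ part of $\nabla^{\hat{H}}(\Psi\tR(z)e^{U/z})=0$ gives $\Psi^{-1}\partial_\mu\Psi=-[\tR_1,\partial_\mu U]$, the order-$z^{-1}$ part gives $\partial_\mu u^i=\sqrt{\Delta^i}\,\Psi_\mu^{\,\ i}$, flatness of the Poincar\'e pairing gives $\langle\partial/\partial u^i,\partial/\partial u^j\rangle_{X,T}=\delta_{ij}/\Delta^i$, and flatness of the unit $\one=\sum_i\partial/\partial u^i$ gives $\sum_j\partial\log\Delta^i/\partial u^j=0$. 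Together these yield $\partial\log\Delta^i/\partial u^j=-2\sqrt{\Delta^i/\Delta^j}\,(\tR_1)_j^{\,\ i}$ for $j\ne i$; feeding this back shows family (2) contributes exactly $\sum_i\frac1{48}d\log\Delta^i$, and adding the two families gives the claimed identity.

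\noindent\textbf{Main difficulty.} The real work is in this last step: matching the two genus-$1$-vertex contributions to $\frac1{48}\sum_i d\log\Delta^i$ by means of the Frobenius structure equations, and in particular producing the coefficient $\frac1{48}$ rather than $\frac1{24}$ — the extra factors $\frac12$ come respectively from $d\log\sqrt{\Delta^i}=\frac12d\log\Delta^i$ and from the order-$2$ automorphism of the self-loop. This is exactly Givental's computation in the smooth case \cite{G97}\cite{Giv3}; the point to verify is that it uses only the semisimplicity of $\hat H$ and Theorem \ref{fundamental}, and nowhere the conformality of the Frobenius manifold, so that it carries over verbatim to the equivariant Chen--Ruan theory of a GKM orbifold.
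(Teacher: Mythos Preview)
Your proposal is correct and follows essentially the same path as the paper: both reduce to the $(g,k)=(1,1)$ graph sum, identify the same three contributing graphs (your family (1) is the paper's $\Gamma_3$; your family (2) comprises the paper's $\Gamma_1$ and $\Gamma_2$), and invoke the order-$z^0$ part of the quantum differential equation in the form $(\Psi^{-1}d\Psi)_j^{\,\ i}=(\tR_1)_j^{\,\ i}(du^j-du^i)$ to handle the genus-one-vertex terms.

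The only substantive difference is how the family-(2) contribution is reduced to $\frac{1}{48}\sum_i d\log\Delta^i$. The paper inserts the normalized canonical basis element $\tilde\phi_j(t,Q)$ from the outset (so the computation lives in canonical coordinates throughout) and then proves a separate Lemma~\ref{Delta} expressing $(\Delta^j)^{-1/2}=\sum_\beta(\Delta^\beta(0))^{-1/2}\Psi_\beta^{\,\ j}$, deduced from the two descriptions $\one=\sum_i\partial/\partial u^i=\sum_\beta(\Delta^\beta(0))^{-1/2}\tilde\phi_\beta$ of the unit; plugging this into $\sum_{i,j}\sqrt{\Delta^i/\Delta^j}(\Psi^{-1}d\Psi)_j^{\,\ i}$ collapses the sum to $-\frac12\sum_i d\log\Delta^i$. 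You instead work in flat coordinates, convert via $\sum_\mu\Psi_\mu^{\,\ i}dt^\mu=(\Delta^i)^{-1/2}du^i$ at the end, and propose to read off $\partial_{u^j}\log\Delta^i$ for $j\neq i$ directly from the Darboux--Egoroff structure equations together with the flatness of $\one$. Both routes rely on the same ingredient (the flat unit) and neither needs conformality; the paper packages the key identity as a standalone lemma, while your version stays closer to the intrinsic geometry of the semisimple Frobenius manifold.
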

\begin{proof}
We will use the graph sum formula for $F^{X,T}_{1,1}(\bt,t)$ to compute $F^{X,T}_{1,0}(t)$. By definition,
$$dF^{X,T}_{1,0}(t)=\sum_{\mu}\frac{\partial F^{X,T}_{1,0}}{\partial t^\mu}dt^\mu=\sum_j\frac{\partial F^{X,T}_{1,0}}{\partial u^j}du^j.$$
We have
\begin{eqnarray*}
\frac{\partial F^{X,T}_{1,0}}{\partial u^j}&=&\llangle \tilde{\phi}_j(t,Q) \rrangle^{X,T}_{1,1}.\\
\end{eqnarray*}
By Theorem \ref{graph sum}, $\llangle \tilde{\phi}_i(t,Q) \rrangle^{X,T}_{1,1}$ can be expresses as a graph sum. It is easy to see that there are only three graphs which have nonzero weights in this graph sum:
\begin{enumerate}
\item $\Gamma_1$: There is one genus one vertex $v$ and one ordinary leaf $l^0$ at $v$ with $a(l^0)=1$ in $\Gamma_1$.

\item $\Gamma_2$: There is one genus one vertex $v$, one ordinary leaf $l^0$ with $a(l^0)=0$ and one dilaton leaf $l^1$ with $a(l^1)=2$ in $\Gamma_2$.

\item $\Gamma_3$: There is one genus zero vertex $v$, one ordinary leaf $l^0$ with $a(l^0)=0$ and one edge $\cE$ which is a loop at $v$ with $a_1(\cE)=a_2(\cE)=0$ in $\Gamma_3$.

\end{enumerate}
By Theorem \ref{graph sum}, the contributions from the above three graphs are
\begin{eqnarray*}
\Cont(\Gamma_1)&=&
-\sum_{i}\sqrt{\Delta^i}\cdot\frac{1}{24}\cdot\frac{1}{\sqrt{\Delta^j}}(\tR_1)_j^{\,\ i}\\
\Cont(\Gamma_2)&=&\Delta^j\cdot\frac{1}{24}\cdot\frac{1}{\sqrt{\Delta^j}}
\sum_p\frac{1}{\sqrt{\Delta^p}}(\tR_1)_p^{\,\ j}\\
\Cont(\Gamma_3)&=&\frac{1}{2}(\tR_1)_j^{\,\ j}.
\end{eqnarray*}
Here we used the fact that $\int_{\Mbar_{1,2}}\psi_1^2=\int_{\Mbar_{1,1}\psi_1}=\frac{1}{24}$.
Therefore
\begin{eqnarray*}
dF^{X,T}_{1,0}(t)&=&\sum_j\frac{\partial F^{X,T}_{1,0}}{\partial u^j}du^j\\
&=&\sum_j\llangle \tilde{\phi}_j(t,Q) \rrangle^{X,T}_{1,1}du^j\\
&=&\sum_j(\Cont(\Gamma_1)+\Cont(\Gamma_2)+\Cont(\Gamma_3))du^j\\
&=&\frac{1}{24}\sum_{i,j}\frac{\sqrt{\Delta^j}}{\sqrt{\Delta^i}}(\tR_1)_i^{\,\ j}(du^j-du^i)+\frac{1}{2}\sum_j(\tR_1)_j^{\,\ j}du^j.
\end{eqnarray*}
Comparing with the right hand side of the equality in the statement of the theorem, we only need to show that
$$\frac{1}{24}\sum_{i,j}\frac{\sqrt{\Delta^j}}{\sqrt{\Delta^i}}(\tR_1)_i^{\,\ j}(du^j-du^i)=\sum_{i=1}\frac{1}{48}d\log\Delta^i.$$
From the quantum differential equation $\nabla\tS=0$, we get a recursive relation
$$(\Psi^{-1}d\Psi)_j^{\,\ i}=(\tR_1)_j^{\,\ i}(du^j-du^i).$$
So we have
$$\frac{1}{24}\sum_{i,j}\frac{\sqrt{\Delta^j}}{\sqrt{\Delta^i}}(\tR_1)_i^{\,\ j}(du^j-du^i)
=-\frac{1}{24}\sum_{i,j}\frac{\sqrt{\Delta^i}}{\sqrt{\Delta^j}}(\Psi^{-1}d\Psi)_j^{\,\ i}.$$
By Lemma \ref{Delta} below, we have
$$\frac{1}{\sqrt{\Delta^j}}=\sum_{\beta\in\Sigma_X}\frac{1}{\sqrt{\Delta^\beta}(0)}
\Psi_\beta^{\,\ j},$$
where $\sqrt{\Delta^\beta}(0)=\sqrt{\Delta^\beta}|_{Q=0,t=0}$.
Therefore
\begin{eqnarray*}
-\sum_{i,j}\frac{\sqrt{\Delta^i}}{\sqrt{\Delta^j}}(\Psi^{-1}d\Psi)_j^{\,\ i}&=&
-\sum_{i,j}\sqrt{\Delta^i}(\Psi^{-1}d\Psi)_j^{\,\ i}(\sum_{\beta}\frac{1}{\sqrt{\Delta^\beta}(0)}\Psi_\beta^{\,\ j})\\
&=&-\sum_{i,j}\sqrt{\Delta^i}\big(\sum_\alpha(\Psi^{-1})_j^{\,\ \alpha}
d\Psi_\alpha^{\,\ i}\big)(\sum_{\beta}\frac{1}{\sqrt{\Delta^\beta}(0)}\Psi_\beta^{\,\ j})\\
&=&-\sum_{i}\sqrt{\Delta^i}\big(\sum_{\alpha}\sum_\beta(\frac{1}{\sqrt{\Delta^\beta}(0)}
\delta_{\alpha\beta})d\Psi_\alpha^{\,\ i}\big)\\
&=&-\sum_{i}\sqrt{\Delta^i}\sum_{\alpha}\frac{1}{\sqrt{\Delta^\alpha}(0)}d\Psi_\alpha^{\,\ i}\\
&=&-\sum_{i}\sqrt{\Delta^i}d(\frac{1}{\sqrt{\Delta^i}})\\
&=&\frac{1}{2}\sum_{i}\frac{d\Delta^i}{\Delta^i}\\
&=&\frac{1}{2}\sum_{i}d\log\Delta^i.
\end{eqnarray*}
Therefore,
$$\frac{1}{24}\sum_{i,j}\frac{\sqrt{\Delta^j}}{\sqrt{\Delta^i}}(\tR_1)_i^{\,\ j}(du^j-du^i)
=-\frac{1}{24}\sum_{i,j}\frac{\sqrt{\Delta^i}}{\sqrt{\Delta^j}}(\Psi^{-1}d\Psi)_j^{\,\ i}=\frac{1}{48}\sum_{i}d\log\Delta^i$$
which finishes the proof.

\end{proof}

In the proof of Theorem \ref{F1}, the following lemma is needed:

\begin{lemma}\label{Delta}
For any $j=1,\cdots,N$,
$$\frac{1}{\sqrt{\Delta^j}}=\sum_{\beta\in\Sigma_X}\frac{1}{\sqrt{\Delta^\beta}(0)}
\Psi_\beta^{\,\ j},$$
where $\sqrt{\Delta^\beta}(0)=\sqrt{\Delta^\beta}|_{Q=0,t=0}$.
\end{lemma}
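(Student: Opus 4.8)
The plan is to deduce the identity from the flatness of the unit vector field $\one$ of the Frobenius manifold $\hat{H}$, together with the description of $\Psi$ as the transition operator carrying the flat basis $\{\tilde{\phi}_\mu\}_{\mu\in\Sigma_X}$ to the normalized canonical basis $\{\tilde{\phi}_i(t,Q)=\sqrt{\Delta^i}\,\partial/\partial u^i\}$, i.e. $\tilde{\phi}_j(t,Q)=\sum_{\beta\in\Sigma_X}\Psi_\beta^{\,\ j}\,\tilde{\phi}_\beta$.

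First I would record two elementary properties of the canonical coordinates $\{u^i\}$. Since $\partial/\partial u^i\star_t\partial/\partial u^j=\delta_{ij}\,\partial/\partial u^i$, the vector field $\sum_i\partial/\partial u^i$ is a unit for $\star_t$ and hence equals $\one$; moreover the idempotents are orthogonal for $\langle\,,\,\rangle_{X,T}$, because for $i\neq j$ the Frobenius identity gives
$$\langle\partial/\partial u^i,\partial/\partial u^j\rangle_{X,T}=\langle\partial/\partial u^i\star_t\partial/\partial u^i,\partial/\partial u^j\rangle_{X,T}=\langle\partial/\partial u^i,\partial/\partial u^i\star_t\partial/\partial u^j\rangle_{X,T}=0,$$
while $\langle\partial/\partial u^i,\partial/\partial u^i\rangle_{X,T}=1/\Delta^i$ by the definition of $\Delta^i$. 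Consequently, at every point $(t,Q)$,
$$\langle\one,\tilde{\phi}_j(t,Q)\rangle_{X,T}=\sqrt{\Delta^j}\,\langle\one,\partial/\partial u^j\rangle_{X,T}=\sqrt{\Delta^j}\cdot\frac{1}{\Delta^j}=\frac{1}{\sqrt{\Delta^j}}.$$

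Next I would pair this with the $\Psi$-expansion of $\tilde{\phi}_j(t,Q)$. Both $\one$ and each flat basis vector $\tilde{\phi}_\beta=\partial/\partial t^\beta$ are flat vector fields on $\hat{H}$, and $\langle\,,\,\rangle_{X,T}$ is the constant Poincar\'e pairing in the flat coordinates $\{t^\mu\}$, so $\langle\one,\tilde{\phi}_\beta\rangle_{X,T}$ does not depend on $(t,Q)$. Evaluating it at $t=0,Q=0$, where $\tilde{\phi}_\beta(0,0)=\tilde{\phi}_\beta$, the identity of the previous paragraph (applied at the origin, with $j$ replaced by $\beta$) gives $\langle\one,\tilde{\phi}_\beta\rangle_{X,T}=1/\sqrt{\Delta^\beta}(0)$. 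Therefore
$$\frac{1}{\sqrt{\Delta^j}}=\langle\one,\tilde{\phi}_j(t,Q)\rangle_{X,T}=\sum_{\beta\in\Sigma_X}\Psi_\beta^{\,\ j}\,\langle\one,\tilde{\phi}_\beta\rangle_{X,T}=\sum_{\beta\in\Sigma_X}\frac{1}{\sqrt{\Delta^\beta}(0)}\,\Psi_\beta^{\,\ j},$$
which is the assertion.

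I do not expect a genuine obstacle here; this is a bookkeeping identity about the geometry of a semisimple Frobenius manifold, needed only to feed into the proof of Theorem \ref{F1}. The two points requiring care are the identification of the canonical index set with $\Sigma_X$ through the coincidence $\tilde{\phi}_i(0,0)=\tilde{\phi}_i$ (equivalently $\Psi|_{t=0,Q=0}=\id$), which is what gives meaning to indexing the right-hand side by $\beta\in\Sigma_X$, and the fact that the pairing of two flat vector fields is a genuine constant so that it may be computed at the origin. An equivalent route avoids the pairing: from $\one=\sum_i(1/\sqrt{\Delta^i})\,\tilde{\phi}_i(t,Q)=\sum_{\beta\in\Sigma_X}\bigl(\sum_i(1/\sqrt{\Delta^i})\,\Psi_\beta^{\,\ i}\bigr)\tilde{\phi}_\beta$ and the constancy of the flat components of $\one$ one obtains $\sum_i(1/\sqrt{\Delta^i})\,\Psi_\beta^{\,\ i}=1/\sqrt{\Delta^\beta}(0)$ for all $\beta$, which one then inverts using $\Psi^{-1}=\Psi^T$.
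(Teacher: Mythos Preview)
Your argument is correct and is essentially the same as the paper's: both compute $\langle\one,\tilde{\phi}_j(t,Q)\rangle_{X,T}$ in two ways, using on one side the idempotent decomposition $\one=\sum_i(1/\sqrt{\Delta^i})\,\tilde{\phi}_i(t,Q)$ and on the other the classical decomposition $\one=\sum_\beta(1/\sqrt{\Delta^\beta}(0))\,\tilde{\phi}_\beta$ together with the change of basis $\Psi$. The only cosmetic difference is that the paper expands $\tilde{\phi}_\beta$ in the normalized canonical basis and uses orthonormality of $\{\tilde{\phi}_i(t,Q)\}$, whereas you expand $\tilde{\phi}_j(t,Q)$ in the flat basis and invoke the constancy of $\langle\one,\tilde{\phi}_\beta\rangle_{X,T}$; these are equivalent via $\Psi^{-1}=\Psi^T$.
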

\begin{proof}
First we should notice that since $\{\frac{\tilde{\phi}_j(t,Q)}{\sqrt{\Delta^j}}\}$ is the canonical basis for the quantum product $\star_t$, multiplication by the element $\sum_j\frac{\tilde{\phi}_j(t,Q)}{\sqrt{\Delta^j}}$ is identity. On the other hand, $\one$ is the identity element in the quantum cohomology ring of $X$. So we have $\sum_j\frac{\tilde{\phi}_j(t,Q)}{\sqrt{\Delta^j}}=\one$. Similarly, since $\{\frac{\tilde{\phi}_\beta}{\sqrt{\Delta^\beta}(0)}\}$ is the canonical basis for the classical orbifold product $\star_X$ and $\one$ is the identity element in the orbifold cohomology ring of $X$, we have $\sum_\beta\frac{\tilde{\phi}_\beta}{\sqrt{\Delta^\beta}(0)}=\one$.

Note that $\langle \tilde{\phi}_i(t,Q),\tilde{\phi}_j(t,Q)\rangle_X=\delta_{ij}$. We have
$$\langle\tilde{\phi}_j(t,Q),\one\rangle_X=
\langle\tilde{\phi}_j(t,Q),\sum_i\frac{\tilde{\phi}_i(t,Q)}{\sqrt{\Delta^i}}\rangle_X
=\frac{1}{\sqrt{\Delta^j}}.
$$

On the other hand
\begin{eqnarray*}
\langle\tilde{\phi}_j(t,Q),\one\rangle_X&=&
\langle\tilde{\phi}_j(t,Q),\sum_\beta\frac{\tilde{\phi}_\beta}{\sqrt{\Delta^\beta}(0)}\rangle_X\\
&=&\langle\tilde{\phi}_j(t,Q),\sum_\beta\frac{1}{\sqrt{\Delta^\beta}(0)}\sum_i\Psi_\beta^{\,\ i}\tilde{\phi}_i(t,Q)\rangle_X\\
&=&\sum_\beta\frac{1}{\sqrt{\Delta^\beta}(0)}\sum_i\Psi_\beta^{\,\ i}\delta_{ij}\\
&=&\sum_\beta\frac{1}{\sqrt{\Delta^\beta}(0)}\Psi_\beta^{\,\ j}.
\end{eqnarray*}
Therefore
$$\frac{1}{\sqrt{\Delta^j}}=\sum_{\beta\in\Sigma_X}\frac{1}{\sqrt{\Delta^\beta}(0)}
\Psi_\beta^{\,\ j}.$$
\end{proof}

\section{Reconstruction from genus zero data}
As mentioned earlier, the operator $\tilde{R}$ is not uniquely determined since we are working with non-conformal Frobenius manifold. However, by Theorem \ref{fundamental} the ambiguity is a constant matrix which allows us to fix the ambiguity by passing to the case when $t=0,Q=0$. In this case, the domain curve is contracted to one of the torus fixed points $p_1,\cdots,p_n$ of $X$ and there is no primary insertions. So we can reduce the problem to the case when $X=[\bC^r/G]$. In this case, we can study the Gromov-Witten theory of $X$ by orbifold quantum Riemann-Roch theorem in \cite{Tse}

\subsection{The case $X=[\bC^r/G]$ and orbifold quantum Riemann-Roch theorem}
In this section, we apply the orbifold quantum Riemann-Roch theorem to $X=[\bC^r/G]$ to get a formula for $\cD_X$. Then we can compare this formula with the Givental formula in the previous section to fix the ambiguity with the operator $\tR$.

Recall that the Bernoulli polynomials $B_m(x)$ are defined by
$$\frac{te^{tx}}{e^t-1}=\sum_{m\geq 0}\frac{B_m(x)t^m}{m!}.$$
The Bernoulli numbers are given by $B_m:=B_m(0)$.

Let $X=[\bC^r/G]$. Then
$$\cI X=\bigsqcup_{(h)\in\Conj(G)} [(\bC^r)^h/C(h)]$$
and
$$H^*(\cI X;\bC)=\bigoplus_{(h)\in\Conj(G)}\bC\one_{(h)}.$$
We recall the Chen-Ruan cohomology $H^*_{\CR,T}( X;\cQ')$ studied in Section \ref{CRcoh}. For each irreducible representation $V_\alpha$ of $G$, let
$$\tilde{\phi}_\alpha=\sqrt{\prod_{j}\sw_j}\sum_{(h)\in\Conj(G)}\chi_\alpha(h^{-1})
\bar{\one}_{(h)}.$$
Then by Section \ref{CRcoh}, we have
$$
\tilde{\phi}_\alpha\star_X\tilde{\phi}_{\alpha'}
=\delta_{\alpha,\alpha'}\frac{|G|\sqrt{\prod_{j}\sw_j}}{\dim V_\alpha}\tilde{\phi}_\alpha
$$
and
$$
\langle \tilde{\phi}_\alpha,\tilde{\phi}_{\alpha'}\rangle_{X,T}=\delta_{\alpha,\alpha'}.
$$
So $\{\tilde{\phi}_\alpha\}$ is a normalized canonical basis for $H^*_{\CR,T}( X;\cQ')$.

For each integer $m\geq 0$ and $i=1,\cdots,r$, define an linear operator $A_m^i:H^*(\cI X)\to H^*(\cI X)$ by
$$A_m^i(\one_{(h)}):=B_m (c_i(h))\one_{(h)}.$$
Then we define the symplectic operator $P(z)$ to be
$$P(z):=\prod_{i=1}^{r}\exp\Bigl(\sum_{m\geq 1}\frac{(-1)^{m}}{m(m+1)} A^i_{m+1} (\frac{z}{\sw_i})^m \Bigr),$$
where $\sw_i$ is the torus character in the $i-$th tangent direction.

Let $A_0:=(H^*_{\CR,T}( X;\cQ'),\star_X,\langle,\rangle_{X,T})$ and we define the cohomological field theory $I_{A_0}$ as follows. Define the genus $g$ correlator $\langle \rangle_{g,k}^{I_{A_0}}$ to be
$$\langle \tau_{a_1}(\tilde{\phi}_{i_1}),\cdots,\tau_{a_k}(\tilde{\phi}_{i_k})\rangle_{g,k}^{I_{A_0}}=
\left\{\begin{array}{ll}(\frac{|G|\sqrt{\prod_{j}\sw_j}}{\dim V_i})^{2g-2+k}\int_{\Mbar_{g,k}}\psi_1^{a_1}\cdots\psi_k^{a_k}, &\textrm{if}\quad i_1=i_2=\cdots=i_k=i,\\
0, &\textrm{otherwise}\end{array} \right.$$
where $a_1,\cdots,a_k$ are nonnegative integers. Let
$$\cD_{I_{A_0}}=\exp\big(\sum_{g\geq 0}\sum_{k\geq 0}\sum_{a_1,\cdots,a_k\geq 0}\sum_{i_1,\cdots,i_k\in\{1,\cdots,|\Conj(G)|\}}\frac{\hbar^{g-1}t_{a_1}^{i_1}\cdots t_{a_k}^{i_k}}{a_1!\cdots a_k!}
\langle \tau_{a_1}(\tilde{\phi}_{i_1}),\cdots,\tau_{a_k}(\tilde{\phi}_{i_k})\rangle_{g,k}^{I_{A_0}}\big).$$

Then we have the following orbifold quantum Riemann-Roch theorem \cite{Tse}:
\begin{theorem}[orbifold quantum Riemann-Roch theorem for $X$]\label{qrr}
$$\cD_X=\widehat{P}\cD_{I_{A_0}}.$$
\end{theorem}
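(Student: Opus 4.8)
The plan is to exploit that $X=[\bC^r/G]$ admits only constant stable maps, to identify $\cD_X$ with a twisted Gromov--Witten potential of $\cB G$, and then to invoke the orbifold quantum Riemann--Roch theorem of Tseng \cite{Tse} (the orbifold counterpart of \cite{CG}), unwinding the resulting quantized symplectic operator until it becomes $\widehat{P}$. \emph{Step 1: reduction to a twisted $\cB G$-theory.} Since $\bC^r$ is $T$-equivariantly contractible with unique $T$-fixed point the origin $\cB G\subset[\bC^r/G]$, every $T$-fixed stable map to $[\bC^r/G]$ is constant with image $\cB G$, so $\Mbar_{g,k}([\bC^r/G],0)^T=\Mbar_{g,k}(\cB G)$. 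Writing $\pi:\cC\to\Mbar_{g,k}(\cB G)$ for the universal orbicurve and $\cV=\bigoplus_{i=1}^r\cV_i$ for the bundle on $\cC$ attached to the representation $\bC^r=\bigoplus_i\chi_i$ (the $i$-th summand carrying the $T$-weight $\sw_i$), the deformation-obstruction complex of $[\bC^r/G]$ along this locus is $R^\bullet\pi_*\cV$; since every $\sw_i\neq0$ the complex is entirely moving, so $e_T(N^\vir)=e_T(R^0\pi_*\cV)/e_T(R^1\pi_*\cV)$ and $1/e_T(N^\vir)$ is the inverse $T$-equivariant K-theoretic Euler class of $R\pi_*\cV$. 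Substituting this into the localization definition of the invariants from Section \ref{orbifold GW} identifies $\cD_X$ with the full descendent potential of the $\mathbf{e}^{-1}$-twisted Gromov--Witten theory of $\cB G$ by the bundle $\bigoplus_iV_i$ in the sense of \cite{Tse}, with $V_i$ the $G$-line of character $\chi_i$ and $T$-weight $\sw_i$.

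\emph{Step 2: the untwisted input.} The Gromov--Witten theory of $\cB G$ is uncorrected: $\Mbar_{g,k}(\cB G)$ is finite over $\Mbar_{g,k}$, the classes $\psi_i$ are pulled back from $\Mbar_{g,k}$, and the resulting CohFT is the trivial (topological) one on the Frobenius algebra $A_0':=(H^*_{\CR,T}(\cB G;\cQ'),\star_{\cB G},\langle,\rangle)$, whose normalized canonical basis $\{\tilde\phi^{\cB G}_\alpha\}$ is indexed by the irreducibles $V_\alpha$ with idempotent norms $\Delta^{\cB G}_\alpha=(|G|/\dim V_\alpha)^2$; its descendent potential is $\cD_{I_{A_0'}}$. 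Under the canonical vector-space identification $H^*_{\CR,T}(\cB G;\cQ')\cong H^*_{\CR,T}([\bC^r/G];\cQ')$ (each $(\bC^r)^h$ retracts onto the origin), the twisted Poincaré pairing accompanying the $\mathbf{e}^{-1}$-twist is exactly the pairing $\langle,\rangle_{X,T}$ of Section \ref{CRcoh}, so the twisted theory lives on the state space of $A_0=(H^*_{\CR,T}([\bC^r/G];\cQ'),\star_X,\langle,\rangle_{X,T})$ as it should.

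\emph{Step 3: apply Tseng's theorem and unwind.} The orbifold quantum Riemann--Roch theorem \cite{Tse} applied to the $\mathbf{e}^{-1}$-twist by $\bigoplus_iV_i$ gives $\cD_X=\widehat{\Delta}\,\cD_{\cB G}$ for an explicit symplectic operator $\Delta(z)$ which, because $e_T$ is multiplicative over the $r$ summands, is a product over $i=1,\dots,r$ of exponentials whose coefficients --- via orbifold Grothendieck--Riemann--Roch for $R^\bullet\pi_*\cV_i$ --- are governed sector by sector by the Bernoulli polynomials $B_m(c_i(h))$ together with $\ch(\cV_i)$. I would then split $\Delta(z)=P(z)\circ\Delta_0$ into its unipotent ($z^{\geq1}$) part and its $z^0$ part $\Delta_0$. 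The operator $\Delta_0$ is a diagonal-in-$(h)$ isometry scaling the sector $(h)$ by $\prod_i\sw_i^{1/2-c_i(h)}=\prod_i\sw_i^{-B_1(c_i(h))}$, together with the dilaton-type rescaling of $(\hbar,\bt)$ dictated by $\Delta^{A_0}_\alpha=(\prod_j\sw_j)\,\Delta^{\cB G}_\alpha$; quantized, it sends $\cD_{\cB G}=\cD_{I_{A_0'}}$ to $\cD_{I_{A_0}}$ (both are trivial CohFTs, so this amounts to matching idempotent norms and state-space normalizations). Expressed in the normalized canonical basis $\{\tilde\phi_\alpha\}$ of $A_0$, the unipotent remainder $P(z)$ is then exactly $\prod_{i=1}^r\exp\bigl(\sum_{m\geq1}\frac{(-1)^m}{m(m+1)}A^i_{m+1}(z/\sw_i)^m\bigr)$, which yields $\cD_X=\widehat{P}\,\cD_{I_{A_0}}$.

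The main obstacle lies entirely in Step 3's bookkeeping: one must fix the direction of the twist ($R^0$ versus $R^1$, hence $\mathbf{e}$ versus $\mathbf{e}^{-1}$) and the sign of the exponent so that the $z$-expansion of the equivariant Euler class of $R^\bullet\pi_*\cV_i$ over the sector $(h)$ produces precisely the coefficients $\frac{(-1)^m}{m(m+1)}B_{m+1}(c_i(h))$, and one must reconcile Tseng's conventions for the symplectic/Fock-space picture --- in particular the $\hbar$- and $\bt$-rescalings absorbed into $\Delta_0$ --- with the normalization used to define $I_{A_0}$. Steps 1 and 2 are routine given the localization formalism of Section \ref{orbifold GW} and the known structure of $\Mbar_{g,k}(\cB G)$.
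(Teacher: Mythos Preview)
Your outline is correct in approach, but you should know that the paper does not actually prove this theorem: it is stated with a bare citation to Tseng \cite{Tse} and treated as a direct specialization of his orbifold quantum Riemann--Roch theorem to $X=[\bC^r/G]$. What you have written is a sketch of how to extract this specialization from Tseng's general statement --- reducing $\cD_X$ to the $\mathbf{e}^{-1}$-twisted theory of $\cB G$ via localization, recognizing the untwisted $\cB G$ theory as the trivial CohFT, and then unwinding Tseng's operator to match the displayed $P(z)$. That is exactly the right derivation, and your identification of the delicate point (the sign and normalization bookkeeping in Step~3, particularly the passage from $\cD_{I_{A_0'}}$ to $\cD_{I_{A_0}}$ via the $z^0$ rescaling) is accurate. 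So there is no discrepancy in method; you have simply supplied the unpacking that the paper leaves to the reader and to \cite{Tse}.
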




Now we make the following key observation. When $t=0,Q=0$, $\Delta_i=\frac{|G|^2\prod_{j}\sw_j}{(\dim V_i)^2}$. This means that the Frobenius algebra $QH^*_{\CR,T}(X)|_{t=0,Q=0}$ is isomorphic to $A_0$ and the isomorphism is given by
$$\tilde{\phi_i}\mapsto \tilde{\phi}_i$$
for $i=1,\cdots,|\Conj(G)|$. On the other hand, when $t=0,Q=0$ we have $\cD_X=\cA_X$ and $\Psi$ is trivial. So by comparing Theorem \ref{ancestor} and Theorem \ref{qrr}, we know that
$$\tR_{j}^{\,\ i}|_{t=0,Q=0}=P_{j}^{\,\ i},$$
where $(P_{j}^{\,\ i})$ is the matrix expression of $P$ under the basis $\{\tilde\phi_\alpha\}$. Therefore if we let $\alpha_i,\alpha_j$ be the corresponding irreducible representation, we have
\begin{equation}\label{eqn:R0}
\tR_{j}^{\,\ i}|_{t=0,Q=0}=\frac{1}{|G|}\sum_{(h)\in \Conj(G)} \chi_{\alpha_j}(h)
\chi_{\alpha_i}(h^{-1})\prod_{k=1}^r \exp\Bigl(\sum_{m=1}^\infty \frac{(-1)^m}{m(m+1)} B_{m+1}(c_k(h))(\frac{z}{\sw_k})^m \Bigr).
\end{equation}

\subsection{The general case}
In general, when $t=0,Q=0$, the domain curve is contracted to one of the torus fixed points $p_1,\cdots,p_n$. So we apply the quantum Riemann-Roch theorem to each $[\bC^r/G_\sigma]$ for $\sigma=1,\cdots,n$ and we obtain $n$ operators $P_1,\cdots,P_n$. So we have the following theorem which fixes the ambiguity with $\tR$.

\begin{theorem}\label{ambiguity}
The operator $\tR$ in theorem \ref{ancestor} is uniquely determined by the property that
$$(\tR_{j}^{\,\ i}) |_{t=0,Q=0}=\diag\big(((P_\sigma)_{j}^{\,\ i})\big)$$
where the block $((P_\sigma)_{j}^{\,\ i})$ is given by
$$(P_\sigma)_{j}^{\,\ i}=\frac{1}{|G_\sigma|}\sum_{(h)\in \Conj(G_\sigma)} \chi_{\alpha_j}(h)
\chi_{\alpha_i}(h^{-1})\prod_{k=1}^r \exp\Bigl(\sum_{m=1}^\infty \frac{(-1)^m}{m(m+1)} B_{m+1}(c_{\sigma k}(h))(\frac{z}{\sw_{\sigma k}})^m \Bigr).$$

\end{theorem}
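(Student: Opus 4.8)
The plan is to reduce everything to the point $t=0,\,Q=0$ and there compare the Givental formula with the orbifold quantum Riemann--Roch theorem, exactly as was done for $X=[\bC^r/G]$ in the analysis leading to \eqref{eqn:R0}. By Theorem \ref{fundamental}(2) the only freedom in the operator $\tR(z)$ appearing in Theorem \ref{ancestor} is right multiplication by a \emph{constant} diagonal series $\exp(a_1z+a_3z^3+\cdots)$; hence $\tR(z)$ is completely pinned down once its value $\tR(z)|_{t=0,Q=0}$ is known, and conversely any prescribed value of $\tR(z)|_{t=0,Q=0}$ selects at most one admissible $\tR$. So both halves of the statement --- that the displayed identity holds and that it determines $\tR$ --- come down to identifying the constant matrix $\tR(z)|_{t=0,Q=0}$.

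To do this I would specialize Theorem \ref{ancestor} at $t=0,\,Q=0$. There the canonical coordinates $u^i$ vanish so $e^{U/z}=\one$, the normalized canonical basis of $\star_X$ is the flat basis $\{\tilde\phi_\mu\}$ so $\Psi=\one$, and (as observed in the treatment of $X=[\bC^r/G]$) the descendent and ancestor potentials agree; thus Theorem \ref{ancestor} reduces to $\cD_X|_{t=0,Q=0}=\widehat{\tR}|_{t=0,Q=0}\,\cD_{I_A}|_{t=0,Q=0}$. Now at this point only degree-zero stable maps contribute, and by virtual localization the relevant fixed locus $\Mbar_{g,k}(X,0)^T$ is the disjoint union, over the fixed points $p_1,\dots,p_n$, of the loci of maps contracted to $p_\sigma$, each of which is governed entirely by the local model $[\bC^r/G_\sigma]$ with tangent representation $\rho_\sigma=\bigoplus_k\chi_{\sigma k}$ and torus weights $\sw_{\sigma k}$. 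Consequently the equivariant Gromov--Witten theory of $X$ at $t=0,\,Q=0$ is the ``product'' over $\sigma$ of the theories of the $[\bC^r/G_\sigma]$, and --- using that $\Delta_i=|G_\sigma|^2\prod_k\sw_{\sigma k}/(\dim V_i)^2$ there --- $\cD_{I_A}|_{t=0,Q=0}$ is correspondingly the product over $\sigma$ of the trivial potentials $\cD_{I_{A_0}}$ attached to $[\bC^r/G_\sigma]$. Applying the orbifold quantum Riemann--Roch theorem (Theorem \ref{qrr}) to each $[\bC^r/G_\sigma]$ gives $\cD_{[\bC^r/G_\sigma]}=\widehat{P_\sigma}\,\cD_{I_{A_0}}$ with $P_\sigma$ the operator built from the Bernoulli operators $A^k_{m+1}$ of $\rho_\sigma$; since quantization carries a direct sum of symplectic operators to the product of their quantizations, comparison of the two product decompositions yields $\widehat{\tR}|_{t=0,Q=0}=\widehat{\diag((P_\sigma))}$, hence $\tR_j^{\,\ i}|_{t=0,Q=0}=\diag\big(((P_\sigma)_j^{\,\ i})\big)$. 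Finally each $P_\sigma$ is diagonal in the basis $\{\bar\one_{(h)}\}$ with eigenvalue $\prod_{k=1}^r\exp\!\big(\sum_{m\ge1}\tfrac{(-1)^m}{m(m+1)}B_{m+1}(c_{\sigma k}(h))(z/\sw_{\sigma k})^m\big)$, and rewriting it in the normalized canonical basis $\tilde\phi_\alpha=\sqrt{\prod_k\sw_{\sigma k}}\sum_{(h)}\chi_\alpha(h^{-1})\bar\one_{(h)}$ via orthogonality of characters produces the stated expression for $(P_\sigma)_j^{\,\ i}$, exactly as in \eqref{eqn:R0}.

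I expect the delicate point to be the passage from equality of the \emph{quantized} operators to the equality $\tR|_{t=0,Q=0}=\diag((P_\sigma))$ of the operators themselves, since $\widehat{A_1}\cD_{I_A}=\widehat{A_2}\cD_{I_A}$ does not in general force $A_1=A_2$. Here it does: both sides are genuine $R$-matrices --- upper-triangular symplectic series of the form $\one+O(z)$ satisfying the unitarity condition --- so by the rigidity recorded in Theorem \ref{fundamental}(2) they can differ only by a constant diagonal factor $\exp(a_1z+a_3z^3+\cdots)$, and comparing coefficients order by order starting at $z^1$ (where the exponent $\sum_{m\ge1}\tfrac{(-1)^m}{m(m+1)}A^k_{m+1}(z/\sw_k)^m$ of $P_\sigma$ contributes nothing of the forbidden constant-diagonal type) shows that factor is trivial. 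Closely related is the claim that the equivariant Gromov--Witten theory of $X$ at the special point splits as a product over $p_1,\dots,p_n$ matching the splitting of $I_A$; this is the content of the virtual localization description of $\Mbar_{g,k}(X,0)^T$ together with the computation of $\Delta_i$ at $t=0,Q=0$, and is routine given Section \ref{CRcoh}. The remaining character-orthogonality manipulation is pure bookkeeping, identical to the case $X=[\bC^r/G]$ already treated.
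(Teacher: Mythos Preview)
Your proposal is correct and follows the paper's approach exactly: specialize to $t=0,\,Q=0$ where $\Psi$ and $e^{U/z}$ trivialize and $\cA_X=\cD_X$, decompose via virtual localization into the local models $[\bC^r/G_\sigma]$ at the fixed points, apply the orbifold quantum Riemann--Roch theorem to each, and compare with the specialized Givental formula; the paper argues more tersely than you and simply asserts the comparison. One small caveat on your ``delicate point'': invoking Theorem~\ref{fundamental}(2) to pass from $\widehat{\tR}|_{t=0,Q=0}\,\cD_{I_A}=\widehat{\diag(P_\sigma)}\,\cD_{I_A}$ to equality of the underlying operators is misplaced --- that theorem constrains $R$-matrices arising in fundamental solutions of the quantum differential equation, not arbitrary unitary series with the same quantized action --- but the conclusion is still correct because the upper-triangular loop group acts faithfully on semisimple cohomological field theories, which is part of Teleman's classification (and which the paper, like you, leaves implicit).
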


Theorem \ref{fundamental} is proved by substituting $\Psi \tR(z)e^{U/z}$ into the quantum differential equation and solve each $(\tR)_k$ inductively (see \cite{Giv2}). In this process, the constant terms in the diagonal entries of each $(\tR)_{2k-1}$ are ambiguous and this ambiguity is fixed by Theorem \ref{ambiguity}. So the matrix $\tR(z)$ can be uniquely reconstructed from the quantum differential equation and Theorem \ref{ambiguity}. Recall that the quantum differential equation is determined by the quantum product $\star_t$ which is given by the genus zero three points function $\llangle \cdots\rrangle_{0,3}^{X,T}$. So by combining Theorem \ref{ancestor} and Theorem \ref{descendent-ancestor}, we have the following reconstruction theorem from genus zero data:


\begin{theorem}[Reconstruction from the Frobenius structure]\label{reconstruction}
The descendent potential $F_{g,k}^{X,T}(\bt,t)$ of a GKM orbifold $X$ can be uniquely reconstructed from the operator $\tR$ which is uniquely determined by the quantum multiplication law and the property
$$(\tR_{j}^{\,\ i}) |_{t=0,Q=0}=\diag\big(((P_\sigma)_{j}^{\,\ i})\big)$$
where $\diag\big(((P_\sigma)_{j}^{\,\ i})\big)$ is a constant matrix which is explicitly given by Theorem \ref{ambiguity}.

\end{theorem}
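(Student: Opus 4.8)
The plan is to assemble the reconstruction statement from the Givental-type quantization formulas already established, together with the rigidity of the $R$-matrix. First I would invoke Theorem \ref{ancestor-descendent} (equivalently Theorem \ref{descendent-ancestor} on the level of correlators with primary insertions) and Theorem \ref{ancestor}: these express $F_{g,k}^{X,T}(\bt,t)$ — or rather the generating function $\cD_X=\exp(F_1(t))\widehat{S_t}^{-1}\widehat{\Psi}\widehat{\tR}\cD_{I_A}$ — entirely through the operators $\widehat{S_t}$, $\widehat{\Psi}$, $\widehat{\tR}$ acting on the trivial-theory potential $\cD_{I_A}$, plus the genus one correction. Here $S_t$ is defined by the $1$-primary $1$-descendent genus zero correlator in Section \ref{sol}, $\Psi$ and $U=\diag(u^i)$ are read off from the canonical coordinates of $\star_t$, and $\Delta_i=\langle\partial/\partial u^i,\partial/\partial u^i\rangle_{X,T}^{-1}$; by Theorem \ref{F1}, $F_1(t)$ itself is expressed through $\Delta_i$, $u^i$ and $\tR_1$. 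Hence every ingredient except the $R$-matrix is visibly determined by the genus zero three-point functions $\llangle\cdot\rrangle_{0,3}^{X,T}$, i.e. by the quantum multiplication law, and it remains only to pin down $\tR$ from the same data.

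Second I would recall Theorem \ref{fundamental}: the unitarity condition $\tR^*(-z)\tR(z)=\one$, imposed on a fundamental solution $\tS=\Psi\tR(z)e^{U/z}$ of the quantum differential equation $\nabla^{\hat H}\tau(z)=0$, determines $\tR(z)$ up to right multiplication by $\exp(a_1 z+a_3 z^3+\cdots)$ with $a_{2k-1}$ \emph{constant} diagonal matrices. Substituting $\tS$ into $\nabla^{\hat H}\tS=0$ and solving order by order in $z$ recovers each $\tR_k$ inductively, with precisely the constant parts of the diagonal entries of the odd coefficients left free. Since the quantum differential equation is itself encoded by $\star_t$, this shows $\tR$ is determined by the quantum multiplication law up to a constant matrix, which may therefore be evaluated at the locus $t=0,Q=0$.

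Third, at $t=0,Q=0$ the surviving virtual localization contributions come from stable maps contracting the domain curve to a single torus fixed point $p_\sigma$, so the computation is local and reduces to $X=[\bC^r/G_\sigma]$. For that target I would apply the orbifold quantum Riemann-Roch theorem (Theorem \ref{qrr}), $\cD_X=\widehat P\,\cD_{I_{A_0}}$, and compare it with the specialization of Theorem \ref{ancestor}, using the key observation $\Delta_i|_{t=0,Q=0}=|G_\sigma|^2\prod_j\sw_{\sigma j}/(\dim V_i)^2$ and the triviality of $\Psi$ there, to read off $\tR_j^{\,\ i}|_{t=0,Q=0}=(P_\sigma)_j^{\,\ i}$ in block-diagonal form — this is exactly the content of Theorem \ref{ambiguity} via (\ref{eqn:R0}). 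Combining: $\tR(z)$ is the unique series solving the quantum differential equation and the unitarity condition whose constant ambiguity equals $\diag\big(((P_\sigma)_j^{\,\ i})\big)$, and feeding this $\tR$, together with $S_t$, $\Psi$, $\Delta_i$, into the graph sum of Theorem \ref{graph sum} (with the modified ordinary leaves) reconstructs $F_{g,k}^{X,T}(\bt,t)$ for $2g-2+k>0$, while the missing $g=1,k=0$ case is supplied by Theorem \ref{F1}.

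The synthesis is largely bookkeeping; the one point requiring genuine care is the comparison at $t=0,Q=0$ between the two quantization formulas. One must check that the trivial cohomological field theory $I_A$ specializes precisely to $I_{A_0}$ under the identification $\Delta_i|_{0}=|G_\sigma|^2\prod_j\sw_{\sigma j}/(\dim V_i)^2$, that the dilaton-shift conventions and Fock-space normalizations match, and that ``degree zero, no primary insertions'' indeed forces $\cD_X=\cA_X$ with $\widehat{\Psi}$ and $\widehat{S_t}$ acting trivially, so that the identity $\widehat{\Psi}\widehat{\tR}\cD_{I_A}=\widehat P\cD_{I_{A_0}}$ can be read entrywise as $\tR|_{0}=P_\sigma$. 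This verification — carried out for the single-chart case in Section 6.1 — is the crux; everything else follows formally.
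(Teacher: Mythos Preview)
Your proposal is correct and follows essentially the same route as the paper: combine Theorem \ref{ancestor} with Theorem \ref{descendent-ancestor} to reduce the reconstruction to determining $\tR$, then use Theorem \ref{fundamental} to see that the only ambiguity is a constant diagonal factor, and fix it at $t=0,Q=0$ via the orbifold quantum Riemann-Roch comparison of Theorem \ref{ambiguity}. You are in fact slightly more careful than the paper's own summary in explicitly invoking Theorem \ref{F1} for the $g=1,k=0$ case and in spelling out what must be checked when matching $\widehat{\tR}\cD_{I_A}$ against $\widehat{P}\cD_{I_{A_0}}$ at the origin.
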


\bigskip


\begin{thebibliography}{99}

\bibitem{AGV08} D. Abramovich, T. Graber, A. Vistoli,
{\em Gromov-Witten theory of Deligne-Mumford stacks},
Amer. J. Math. {\bf 130} (2008), no. 5, 1337--1398.

\bibitem{BKMP1}  V. Bouchard, A. Klemm, M. Mari\~{n}o, S. Pasquetti,
{\em Remodeling the B-model},
Comm. Math. Phys. {\bf 287} (2009), no. 1, 117--178.

\bibitem{BKMP2} V. Bouchard, A. Klemm, M. Mari\~{n}o, S. Pasquetti,
{\em Topological open strings on orbifolds},
Comm. Math. Phys. {\bf 296} (2010), no. 3, 589--623.

\bibitem{BCR} A. Brini, R. Cavalieri, D. Ross,
``Crepant resolutions and open strings,'' {\tt arXiv:1309.4438}.

\bibitem{BrCa} A. Brini, R. Cavalieri,
``Crepant resolutions and open strings II,'' {\tt arXiv:1407.2571}.

\bibitem{BG} J. Bryan, T. Graber, ``The crepant resolution conjecture,''
Algebraic geometry--Seattle 2005. Part 1, 23--42, Proc. Sympos. Pure Math., {\bf 80}, Part 1,
Amer. Math. Soc., Providence, RI, 2009.

\bibitem{CR02} W. Chen, Y. Ruan,
{\em Orbifold Gromov-Witten theory}, Orbifolds in mathematics and physics,
25--85, Contemp. Math., {\bf 310}, Amer. Math. Soc., Providence, RI, 2002.


\bibitem{Chen-Ruan04} W. Chen, Y. Ruan {\em A new cohomology theory for orbifold},
Comm. Math. Phys. 248 (2004), no. 1, 1--31.

\bibitem{Che-Cio-Kim} D.~Cheong, I.~Ciocan-Fontanine, B.~Kim {\em Orbifold Quasimap Theory}, arxiv: 1405.7160.

\bibitem{Coa} T. Coates, {\em Riemann-Roch theorem in Gromov-Witten theory}, PhD thesis, UC Berkeley, spring 2003.

\bibitem{CCIT} T.~Coates, A.~Corti, H.~Iritani, H.H.~ Tseng {\em A Mirror Theorem for Toric Stacks}, arxiv: 1310.4163.

\bibitem{CG} T.~Coates, A. Givental, {\em Quantum Riemann - Roch, Lefschetz and Serre}, Ann. of Math., 165 (2007), no.1, 15-53.

\bibitem{CI} T. Coates, H. Iritani,
``A Fock Sheaf For Givental Quantization,''  {\tt arXiv:1411.7039}.

\bibitem{CoRu} T. Coates, Y. Ruan,
``Quantum cohomology and crepant resolutions: a conjecture.''
Ann. Inst. Fourier (Grenoble) {\bf 63} (2013), no. 2, 431--478.

\bibitem{Cox-Kat} D. Cox, S, Katz {\em Mirror symmetry and algebraic geometry}, Math. Surveys and monographs, 68. American Math. Soc., Providence, RI,1999.

\bibitem{D} B. Dubrovin,
{\em Geometry of 2D topological field theories}, Integrable systems and quantum groups, Springer Lecture Notes in Math. 1620 (1996), 120--348.

\bibitem{D-Sha-spi} P. Dunin-Barkowski, S. Shadrin, L. Spitz,
{\em Givental graphs and inversion symmetry }
Lett. Math. Phys. {\bf 103} (2013), no. 5, 533--557.

\bibitem{DOSS} P. Dunin-Barkowski, N. Orantin, S. Shadrin, and L. Spitz,
{\em Identification of the Givental formula with the spectral curve topological recursion procedure},
Comm. Math. Phys. {\bf 328} (2014), no. 2, 669--700.

\bibitem{EO07} B. Eynard, N. Orantin, {\em Invariants of algebraic curves and topological expansion}, Commun. Number Theory Phys. {\bf 1} (2007), no. 2, 347--452.

\bibitem{Fang-Liu-Zong1} B.~Fang, C.C.~Liu, Z.~Zong, {\em All genus open-closed mirror symmetry for affine toric Calabi-Yau 3-orbifolds}, arXiv:1310.4818.

\bibitem{Fang-Liu-Zong2} B.~Fang, C.C.~Liu, Z.~Zong, {\em Equivariant Gromov-Witten Theory of Affine Smooth Toric Deligne-Mumford Stacks}, International Mathematics Research Notices, 2015, doi: 10.1093/imrn/rnv201.

\bibitem{Fang-Liu-Zong3} B.~Fang, C.C.~Liu, Z.~Zong, {\em On the Remodeling Conjecture for toric Calabi-Yau 3-orbifolds}, in preparation.

\bibitem{FLZ4} B. Fang, C.-C.M. Liu, Z. Zong, in preparation.

\bibitem{Giv0} A. Givental, {\em Equivariant Gromov-Witten invariants},
Internat. Math. Res. Notices {\bf 1996}, no. 13, 613--663.

\bibitem{G97} A. B. Givental, {\em Elliptic Gromov-Witten invariants and the generalized mirror conjecture},
 Integrable systems and algebraic geometry (Kobe/Kyoto, 1997), 107--155, World Sci. Publ., River Edge, NJ, 1998.

\bibitem{Giv1} A.~Givental, {\em A mirror theorem for toric complete intersections}, Topological field theory, primitive forms and related topics (Kyoto, 1996), pp.141--175, Progr. Math., 160, Birkh\"{a}user Boston, Boston, MA, 1998.

\bibitem{Giv2} A.~Givental, {\em Semisimple Frobenius structures at higher genus},
IMRN (2001), no. 23, 1265--1286.

\bibitem{Giv3} A.~Givental, {\em Gromov-Witten invariants and quantization of quadratic hamiltonians},
Mosc. Math. J. 1 (2001), no. 4, 551--568.

\bibitem{Giv4} A.~Givental, {\em Symplectic geometry of Frobenius structures},
In "Frobenius manifolds", 91--112, Aspects Math., E36, Vieweg, Wiesbaden, 2004.

\bibitem{Gra-Pan1}
T.~Graber, R.~Pandharipande,
{\em Localization of virtual classes}, preprint 1998, available at
http://www.math.ethz.ch/$\sim$rahul/loc.ps.

\bibitem{Gra-Pan2}
T.~Graber, R.~Pandharipande,
{\em Localization of virtual classes},
Invent. Math. {\bf 135} (1999), no. 2, 487--518.

\bibitem{GKM} M.~Goresky, R. Kottwitz, R. MacPherson, {\em Equivariant cohomology, Koszul duality, and the localization theorem},
Invent. Math. 131 (1998), no. 1, 25--83.

\bibitem{Iri} H. Iritani, {\em An integral structure in quantum cohomology and mirror symmetry for toric orbifolds}, Adv. Math. {\bf 222} (2009), no. 3, 1016--1079.

\bibitem{JK} T. J. Jarvis, T. Kimura,
{\em Orbifold quantum cohomology of the classifying space of a finite group},
Orbifolds in mathematics and physics (Madison, WI, 2001), 123134,
Contemp. Math., {\bf 310}, Amer. Math. Soc., Providence, RI, 2002.

\bibitem{Lee-Pan} Y.-P, Lee, R. Pandharipande {\em Frobenius manifolds, Gromov-Witten theory and virasoro constraints},
Preprint.

\bibitem{Lian-Liu-Yau1} B.~Lian, K.~Liu, S.-T.~Yau,
{\em Mirror Principle I},
Asian Journal of Math 1: 729--763, 1997.

\bibitem{Lian-Liu-Yau2} B.~Lian, K.~Liu, S.-T.~Yau,
{\em Mirror Principle II},
Asian Journal of Math 3: 109--146, 1999.

\bibitem{Lian-Liu-Yau3} B.~Lian, K.~Liu, S.-T.~Yau,
{\em Mirror Principle III},
Asian Journal of Math 3: 771--800, 1999.

\bibitem{Lian-Liu-Yau4} B.~Lian, K.~Liu, S.-T.~Yau,
{\em Mirror Principle IV},
Surveys in Differential Geometry: 475--496, 2000.

\bibitem{Liu13} C.-C. M. Liu, Localization in Gromov-Witten theory and orbifold
Gromov-Witten theory, Handbook of Moduli, Volume II, 353425, Adv. Lect. Math., (ALM) {\bf 25}, International Press and Higher Education Press, 2013.

\bibitem{Man} Y. Manin, {\em Frobenius manifolds, quantum cohomology and moduli spaces}, Amer. Math. Soc. Colloquium Publications, 47. Amer. Math. Soc., Providence, RI,1999.

\bibitem{Ma} M. Mari\~{n}o, {\em Open string amplitudes and large order behavior in topological string theory}, J. High Energy
Phys. 2008 , no. 3, 060, 34 pp.

\bibitem{Ruan1} Y. Ruan,
``Stringy geometry and topology of orbifolds,''
Symposium in Honor of C. H. Clemens (Salt Lake City, UT, 2000), 187--233, Contemp. Math., {\bf 312}, Amer. Math. Soc., Providence, RI, 2002.

\bibitem{Ruan2} Y. Ruan,
``Cohomology ring of crepant resolution of orbifolds,''
{\em Gromov-Witten theory of spin curves and orbifolds}, 117--126, Contemp. Math., {\bf 403}, Amer. Math. Soc., Providence, RI, 2006.


\bibitem{Tel} C. Teleman, {\em The structure of 2D semisimple field theories}, Invent. Math. 188 (2012), no. 3, 525--588.

\bibitem{Tse} H.-H. Tseng,
{\em Orbifold quantum Riemann-Roch, Lefschetz and Serre},
Geom. Topol. {\bf 14} (2010), no. 1, 1--81.

\bibitem{Zhou}
J.~Zhou,
{\em Crepant resolution conjecture in all genera for type A singularities},
arXiv:0811.2023.

\end{thebibliography}
\end{document}